\theoremstyle{plain}
\newtheorem{theorem}{Theorem}[section]
\newtheorem{lemma}[theorem]{Lemma}
\newtheorem{proposition}[theorem]{Proposition}
\newtheorem{corollary}[theorem]{Corollary}
\numberwithin{equation}{section}
\theoremstyle{definition}
\newtheorem{definition}[theorem]{Definition}
\newtheorem{example}[theorem]{Example}
\newtheorem{remark}[theorem]{Remark}
\newcommand{\C}{{\mathscr{C}}}
\newcommand{\FI}{{\mathrm{FI}}}
\newcommand{\VI}{{\mathrm{VI}}}
\newcommand{\OI}{{\mathrm{OI}}}
\newcommand{\CMod}{{\C\mbox{-Mod}}}
\newcommand{\N}{{\mathbb{N}}}
\newcommand{\CN}{{\mathrm{N}}}
\newcommand{\mm}{{\mathfrak{m}}}
\newcommand{\bfn}{{\mathbf{n}}}
\DeclareMathOperator{\Ob}{Ob}
\DeclareMathOperator{\gd}{gd}
\DeclareMathOperator{\hd}{hd}
\DeclareMathOperator{\reg}{reg}
\DeclareMathOperator{\Tor}{Tor}
\title[Asymptotic behavior of representations of graded categories]{Asymptotic behavior of representations of graded categories with inductive functors}
\author{Wee Liang Gan}
\address{Department of Mathematics, University of California, Riverside, CA 92521, USA}
\email{wlgan@math.ucr.edu}
\author{Liping Li}
\address{HPCSIP (Ministry of Education), College of Mathematics and Computer Science, Hunan Normal University, Changsha, Hunan 410081, China.}
\email{lipingli@hunnu.edu.cn}
\thanks{The second author is supported by the National Natural Science Foundation of China 11771135 and the Start-Up Funds of Hunan Normal University 830122-0037.}
\begin{document}

\begin{abstract}
In this paper we describe inductive machinery to investigate asymptotic behavior of homology groups and related invariants of representations of certain graded combinatorial categories over a commutative Noetherian ring $k$, via introducing inductive functors which generalize important properties of shift functors of $\FI$-modules. In particular, a sufficient criterion for finiteness of Castelnuovo-Mumford regularity of finitely generated representations of these categories is obtained. As applications, we show that a few important infinite combinatorial categories appearing in representation stability theory (for example $\FI^d$, $\OI^d$, $\FI_d$, $\OI_d$) are equipped with inductive functors, and hence the finiteness of Castelnuovo-Mumford regularity of their finitely generated representations is guaranteed. We also prove that truncated representations of these categories have linear minimal resolutions by relative projective modules, which are precisely linear minimal projective resolutions when $k$ is a field of characteristic 0.
\end{abstract}

\maketitle

\section{Introduction}

\subsection{Motivation}

Asymptotic behavior of homology groups and related invariants (such as syzygies, Betti numbers, Castelnuovo-Mumford regularity, etc.) of ideals and modules of graded algebras has been extensively considered in commutative algebra and representation theory; see for instance \cite{Av, AIS, AP, CD, CHT, DHS, EG, ES, HT, Hu, IR, Tr}. Recently, Church and Farb introduced in \cite{CF} the notion of \emph{representation stability}, providing a new framework to study asymptotic behavior of homology or cohomology groups of a family of topological spaces with certain symmetry via viewing these (co)homology groups as representations of different symmetry groups. Moreover, they and Ellenberg introduced in \cite{CEF} \emph{$\FI$-modules} to categorify representation stability theory. Soon after, quite a few infinite combinatorial categories were introduced to study asymptotic behavior of various modules and their homology groups. See for instance the foundational papers of Church, Ellenberg, Farb, Nagpal (eg. \cite{CE, CEF, CEFN}), Putman, Sam, Snowden (eg. \cite{PS, SS, SS3}), and Wilson (eg. \cite{Wi1, Wi2}). For applications in topology, geometric group theory, number theory or representation theory,  see for example the papers of Casto (eg. \cite{Ca1, Ca2}), Church-Ellenberg-Farb \cite{CEF2}, Church-Putman \cite{CP}, Ellenberg-Wiltshire-Gordon \cite{EWG}, Farb-Wolfson \cite{FW}, Gadish \cite{Gad2}, Harman \cite{Har1, Har2}, Jim\'enez Rolland, Wilson (eg. \cite{JR,  JRW1, JRW2}), Miller-Wilson (eg. \cite{MW1, MW2}), Miller-Patzt-Wilson \cite{MPW}, Nagpal  \cite{Nag}, Patzt \cite{Pa1}, Patzt-Wu \cite{PW}, Putman, Sam, and Snowden (eg. \cite{Put, PS, PSS, SS4}). Additionally, algebraic aspects of the theory have been studied, for example, by Djament (eg. \cite{Dj}), Gadish \cite{Gad}, Gan-Li (eg. \cite{GL5}-\cite{GL3}), Gan-Watterlond \cite{GW1, GW2}, Li (eg. \cite{L1, L2}), Li-Ramos \cite{LR}, Li-Yu \cite{LY1, LY2}, Nagpal \cite{Nag2}, Patzt \cite{Pa2}, Ramos \cite{R1, R2}, Sam and Snowden (eg. \cite{SS5, SS1}).

The following table contains a few interesting categories frequently appearing in literature.
\begin{center}
Table 1: A few infinite combinatorial categories
\begin{tabular}{c|cc}
  Categories & Objects & Morphisms\\
  \hline
  $\CN$ & $[n], n \in \N$ & the natural inclusions\\
  $\CN^d$ & $[n_1] \times \ldots \times [n_d], n_i \in \N$ & $d$-tuples of natural inclusions\\
  $\FI$ & $[n], n \in \N$ & injections\\
  $\FI_G$ & $[n], n \in \N$ & $(f, g): [n] \times [n] \to [m] \times G$ where $f$ is injective\\
  $\OI$ & $[n], n \in \N$ & order-preserving injections\\
  $\FI^d$ & $[n_1] \times \ldots \times [n_d], n_i \in \N$ & $d$-tuples of injections\\
  $\OI^d$ & $[n_1] \times \ldots \times [n_d], n_i \in \N$ & $d$-tuples of order-preserving injections\\
  $\FI_d$ & $[n], n \in \N$ & pairs of injections and $d$-coloring maps\\
  $\OI_d$ & $[n], n \in \N$ & pairs of order-preserving injections and $d$-coloring maps\\
  $\VI_q$ & $\mathbb{F}_q^{\oplus n}$ over a finite field $\mathbb{F}_q$, $n \in \N$ & linear injections.
\end{tabular}
\end{center}

It was firstly observed in \cite{CEF} that the category of $\FI$-modules is equipped with a \emph{shift functor}, which played a central role in proving various fundamental representation theoretic and homological results of $\FI$. In \cite{GL2}, the authors axiomatized certain crucial properties of shift functors and constructed conceptual inductive machinery, generalizing many important results of $\FI$-modules (such as Noetherianity, finiteness of Castelnuovo-Mumford regularity, polynomial growth property, etc.) to modules of abstract categories equipped with shift functors. However, this axiomatic approach only applies to some special examples in the above table such as $\CN$, $\FI$, $\FI_G$ and $\OI$. Stronger machinery working for all examples in the above table is still missing. In particular, although the locally Noetherian property of these examples over commutative Noetherian rings have been established (see \cite{SS}), except for $\CN^d$, $\FI$ and $\OI$, it is unknown whether their finitely generated representations over commutative Noetherian rings have finite Castelnuovo-Mumford regularity.

We note that all these examples have a self-embedding phenomenon. That is, each one is isomorphic to a proper subcategory of itself. This self-embedding functor induces a pullback functor in their module categories, which share many key properties of the shift functor of $\FI$-modules. This observation motivated us to consider generalized shift functors (which we shall call \emph{inductive functors}), and ask whether these generalized shift functors can also provide inductive machinery for proving certain fundamental results of representations. This is indeed the case. That is, the existence of inductive functors makes possible for us to establish generalized inductive machinery (which is significantly improved compared to the one described in \cite{GL2}) to deduce asymptotic behavior of modules and their homology groups of abstract locally Noetherian categories, including all categories in Table 1 except $\VI_q$ as examples. With this inductive machinery, we can show that the existence of inductive functors, together with a few moderate assumptions, guarantees the finiteness of Castelnuovo-Mumford regularity of finitely generated representations. In particular, we deduce that finitely generated representations of categories in Table 1 (the conclusion for $\VI_q$ is still unknown) have finite regularity.

\subsection{Notations}

Throughout this paper we let $\C$ be a \emph{locally finite, locally bounded graded category}. That is, $\C$ is a small category satisfying the following conditions:
\begin{itemize}
\item $\C$ is \emph{locally finite}; that is, for every pair of objects $x, y$, the morphism set $\C(x, y)$ is finite.
\item $\C$ is \emph{directed}; that is, there is a partial ordering $\leqslant$ over $\Ob \C$ such that $\C(x, y) \neq \emptyset$ if and only if $x \leqslant y$.
\item There is a rank function $| \cdot |: \Ob \C \to \N$, where $\N$ is the set of nonnegative integers, such that $|x| \leqslant |y|$ whenever $x \leqslant y$, and $|y| = |x| + 1$ when $y$ \emph{covers} $x$.
\item $\C$ is \emph{locally bounded}; that is, for every $n \in \N$, the set $\{x \in \Ob \C \mid |x| = n \}$ is finite.
\end{itemize}

It is easy to see that all examples in Table 1 are locally finite, locally bounded graded categories, with rank functions being defined by $[n] \mapsto n$, $[n_1] \times \ldots \times [n_d] \to n_1 + \ldots + n_d$, and $\mathbb{F}_q^n \mapsto n$ respectively.

Let $k$ be a commutative Noetherian unital ring. A \emph{representation} $V$ of $\C$ (or a \emph{$\C$-module}) is a covariant functor from $\C$ to the category of all $k$-modules. Morphisms between $\C$-modules are natural transformations. Denote the category of all $\C$-modules by $\CMod$. This is an abelian category containing enough projective objects. In particular, a $\C$-module of the form $k\C(x, -)$ with $x \in \Ob \C$ is projective. We call it a \emph{free} $\C$-module, and denote it by $M(x)$.

Given a $\C$-module $V$, the \emph{value} $V(x)$ of $V$ on an object $x$ in $\C$ is denoted by $V_x$. By abuse of notation, sometimes we regard $V$ as a $k$-module by identifying it with
\begin{equation*}
\bigoplus _{x \in \Ob \C} V_x.
\end{equation*}
The $\C$-module $V$ is \emph{generated in degrees} $\leqslant n$ if it has no proper submodule containing
\begin{equation*}
\bigoplus_{|x| \leqslant n} V_x.
\end{equation*}
The \emph{generating degree} is set to be
\begin{equation*}
\gd(V) = \inf \{ n \in \N \mid V \text{ is generated in degrees } \leqslant n \}
\end{equation*}
or -1 if $V = 0$ by convention. The $\C$-module $V$ is \emph{finitely generated} if there exist a finite set of elements $v_1, \ldots, v_n \in V$ such that no proper submodules of $V$ contains all these elements. Note that free modules are finitely generated. The graded category $\C$ is \emph{locally Noetherian} over $k$ if every finitely generated $\C$-module $V$ is \emph{Noetherian}; that is, submodules of finitely generated $\C$-modules are finitely generated as well. In this case, the category of finitely generated $\C$-modules is abelian.

\subsection{Inductive machinery}

To investigate homological properties and asymptotic behavior of $\C$-modules, we need efficient machinery. In \cite{GL2} we have introduced inductive machinery based on the shift functor introduced in \cite{CEF, CEFN} for $\FI$-modules, uniformly proving quite a few interesting results. Here we generalize this machinery so that it is applicable to many other categories.

A covariant functor $F: \CMod \to \CMod$ is called an \emph{inductive functor of multiplicity $d$} if it satisfies the following requirements:
\begin{itemize}
\item $F$ is exact;
\item there is a natural transformation $\theta: \mathrm{Id}^{\oplus d} \to F$, where $d$ is a positive integer and $\mathrm{Id}$ is the identity functor in $\CMod$;
\item for every object $x \in \Ob \C$, $FM(x)$ is finitely generated, and the natural map $\theta_{M(x)}: M(x)^{\oplus d} \to FM(x)$ is injective;
\item for every nonzero $V \in \CMod$, one has
\begin{equation*}
\gd(DV) = \gd(V) - 1,
\end{equation*}
where $D$ is the cokernel functor induced by natural maps $\theta_V: V^{\oplus d} \to FV$.
\end{itemize}

The above conditions imposed on $F$ are moderate. Although the above definition does not shed any light on how to find or construct such a functor, for many combinatorial categories, in particular for those equipped with a monoid structure, it is often possible to construct a self-embedding functor which induces an inductive functor. Actually, we will use this strategy to define inductive functors for categories $\CN^d$, $\FI^d$, $\OI^d$, $\FI_d$, $\OI_d$.

The existence of natural maps $\theta_V: V^{\oplus d} \to FV$ is very crucial. It actually gives a family of natural maps
\begin{equation*}
\bigoplus_{s \in S} V^s \to V^{\oplus d} \to FV,
\end{equation*}
where $S$ is a subset of $[d]$, $V^s$ means the $s$-th copy of $V$ in $V^{\oplus d}$, and the first map is the inclusion. We denote its kernel and cokernel by $K_SV$ and $D_SV$ respectively. Note that $D_SV = FV$ when $S = \emptyset$ and $D_SV = DV$ when $S = [d]$. Therefore, the collection of functors $D_SV$ interpolate $FV$ and $DV$. Moreover, one has the following exact sequence
\begin{equation*}
0 \to K_SV \to \bigoplus_{s \in S} V^s \to FV \to D_SV \to 0.
\end{equation*}
In practice we are more interested in short exact sequences; that is, $K_SV  = 0$. Correspondingly, a subset $S \subseteq [d]$ is called a \emph{nil} subset for $V$ if $K_SV = 0$. Note that nil subsets for $V$ always exist since when $S = \emptyset$, $K_S V = \bigoplus_{s \in S} V^s = 0$.

Let (P) be a property of certain $\C$-modules such as finitely generated property, finitely presented property, Noetherianity. We say that (P) is
\begin{itemize}
\item \emph{maximal nil dominant} if for every $\C$-module $V$, the assumption that $D_SV$ has (P) with $S$ being a maximal nil subset for $V$ implies that $V$ satisfies (P);
\item \emph{glueable} if in every short exact sequence $0 \to U \to V \to W \to 0$ of $\C$-modules, the assumption that $U$ and $W$ have (P) implies that $V$ satisfies (P).
\end{itemize}

The following theorem describes inductive machinery for exploring asymptotic behavior of $\C$-modules, provided that the locally Noetherianity of $\C$ over $k$ is already known.

\begin{theorem} \label{main theorem I}
Let $\C$ be a locally finite, locally bounded graded category equipped with an inductive functor of multiplicity $d$, and let $k$ be a commutative Noetherian ring. Suppose that $\C$ is locally Noetherian over $k$. Let (P) be a property of certain $\C$-modules which is satisfied by the zero module. Then every finitely generated $\C$-module has (P) if and only if (P) is maximal nil dominant and glueable.
\end{theorem}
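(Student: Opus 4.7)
The ``only if'' direction is essentially immediate from the definitions and will not be discussed further. The substance is the ``if'' direction, which I would prove by induction on $\gd(V)$: the base case $\gd(V) = -1$ (i.e., $V = 0$) holds by hypothesis. For the inductive step, assume every finitely generated $\C$-module of generating degree less than $n$ has (P), and suppose toward contradiction that some finitely generated $V$ with $\gd(V) = n$ lacks (P). Since $\C$ is locally Noetherian, iterating bad quotients yields a strictly increasing chain of submodules, so I may replace $V$ by a \emph{quotient-minimal} bad module: every proper quotient of $V$ has (P).

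Fix a maximal nil subset $S \subseteq [d]$ for $V$ and set $T = [d] \setminus S$. Comparing the exact sequence defining $D_SV$ (which is short exact since $K_SV = 0$) with the four-term sequence $0 \to K_{[d]}V \to V^{\oplus d} \to FV \to DV \to 0$, one obtains the short exact sequence
\begin{equation*}
0 \to Q \to D_SV \to DV \to 0,
\end{equation*}
where $Q$ is identified with $\bigoplus_{t \in T} V^t \,/\, \pi_T(K_{[d]}V)$ via the projection $\pi_T$ onto the $T$-coordinates. Exactness of $F$ combined with the finite generation of each $FM(x)$ shows that $FV$, and hence $DV$ and $Q$, are finitely generated. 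Since $\gd(DV) = n-1$, the inductive hypothesis gives (P) for $DV$. If $Q$ also has (P), glueability gives (P) for $D_SV$, and maximal nil dominance then gives (P) for $V$, the desired contradiction.

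To obtain (P) for $Q$, I would run a secondary induction on $|R|$, where $R$ ranges over subsets of $T$, establishing that
\begin{equation*}
W_R := \bigoplus_{r \in R} V^r \,/\, \pi_R(K_{S \cup R}V)
\end{equation*}
has (P). The case $|R| = 0$ is trivial. For $|R| = 1$, say $R = \{t\}$: the nilness of $S$ forces $\pi_t$ to be injective on $K_{S \cup \{t\}}V$, and the maximality of $S$ forces $K_{S \cup \{t\}}V \neq 0$, so $W_{\{t\}}$ is a proper quotient of $V$ and has (P) by quotient-minimality. For $|R| \geq 2$, write $R = R' \sqcup \{t_0\}$; splitting $\bigoplus_{r \in R} V^r = \bigoplus_{r \in R'} V^r \oplus V^{t_0}$ and tracking how $\pi_R(K_{S \cup R}V)$ meets each summand (using the additivity of the maps involved) yields the short exact sequence
\begin{equation*}
0 \to W_{R'} \to W_R \to V / \pi_{t_0}(K_{S \cup R}V) \to 0.
\end{equation*}
The left term has (P) by the secondary inductive hypothesis, and the right is a proper quotient of $V$ (its denominator contains the image of $K_{S \cup \{t_0\}}V$, which is nonzero), hence has (P) by quotient-minimality. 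Glueability closes the step, and taking $R = T$ gives (P) for $Q = W_T$. The principal difficulty is the middle paragraph: one cannot in general ensure $\gd(D_SV) < n$ when $S \subsetneq [d]$, so the naive induction on $\gd$ alone fails, and the correct remedy is the quotient-minimality reduction together with the secondary induction exhibiting each $W_R$ as an iterated extension of proper quotients of $V$ by modules indexed by smaller subsets of $T$.
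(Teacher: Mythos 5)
Your argument is correct, and while it uses the same essential ingredients as the paper, it organizes the induction in a genuinely different way. Common to both: the outer induction on $\gd(V)$ driven by the axiom $\gd(DV)=\gd(V)-1$, the choice of a maximal nil subset $S$, and a filtration of the kernel of $D_SV \to DV$ whose successive quotients are proper quotients of $V$ --- the identity your step needs, $V^{R'} \cap \pi_R(K_{S\cup R}V) = \pi_{R'}(K_{S\cup R'}V)$, does hold (a direct check on components, using $K_{S\cup R'}V = KV\cap V^{S\cup R'}$), so your sequence $0 \to W_{R'} \to W_R \to V/\pi_{t_0}(K_{S\cup R}V) \to 0$ is exact and its right-hand terms are exactly the kernels produced in Lemma \ref{filtration} (there obtained via two snake-lemma diagrams), while the nonvanishing of $\pi_{t_0}(K_{S\cup\{t_0\}}V)$, forced by maximality of $S$, is Lemma \ref{jump}. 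The genuine divergence is in how the proper quotients of $V$ are disposed of. The paper recurses into them: each proper quotient is given its own maximal nil subset and its own children, yielding the tree $\mathcal{T}(V)$ of descendants; local Noetherianity is used to prove this tree finite (Lemma \ref{T(V) is finite}), and (P) is then propagated from the leaves back to $V$ layer by layer via Lemma \ref{lifting in tree}. You instead spend the Noetherian hypothesis at the outset, passing to a quotient-minimal counterexample (a submodule $W$ maximal among those with $V/W$ lacking (P)), after which every proper quotient has (P) for free and a single pass through the filtration, glueability, and maximal nil dominance produce the contradiction. Your route is shorter and dispenses with the bookkeeping tree and its finiteness lemma; the paper's tree makes the recursive structure explicit (and is what Remark \ref{justification} is organized around), but both proofs use maximality of $S$ for the same purpose --- to ensure the successive quotients are proper --- and both invoke local Noetherianity at the same essential juncture, namely to terminate the descent through proper quotients. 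Your dismissal of the ``only if'' direction as immediate is consistent with the paper, which likewise proves only the ``if'' direction.
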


The notion of maximal nil dominant may seem artificial to the reader at a first glance. There are two main reasons. Firstly, if $S$ is a nil subset, then there is a short exact sequence
\begin{equation*}
0 \to \bigoplus_{s \in S} V^s \to FV \to D_SV \to 0.
\end{equation*}
Moreover, by the naturality of the map $\bigoplus_{s \in S} V^s \to FV$, every projective resolution $P^{\bullet} \to V \to 0$ induces a short exact sequence of complexes
\begin{equation*}
0 \to \bigoplus_{s \in S} P^{\bullet} \to FP^{\bullet} \to D_SP^{\bullet} \to 0,
\end{equation*}
while the complexes $FP^{\bullet}$ and $D_SP^{\bullet}$ are still exact. Therefore, by imposing moderate conditions on the inductive functor $F$, one may deduce properties of $\bigoplus_{s \in S} V^s$ (for example, finiteness of regularity), and hence properties of $V$, from corresponding properties of $D_SV$. However, if $S$ is not a nil subset, this inductive step in general fails. Secondly, we shall use subsets of $[d]$ to construct a tree of quotient modules of $V$ to carry out the induction, and a maximal nil subset guarantees that this tree is a finite tree, so that our induction can be carried out; see Remark \ref{justification}.

We briefly describe the idea for proving the above theorem. The general strategy is an induction on the generating degree $\gd(V)$, which is finite. Since (P) is maximal nil dominant, it suffices to show that $D_SV$ has (P) where $S$ is a maximal nil subset for $V$. We construct a sequence of quotient maps starting from $D_SV$ and ending at $DV$, in which the kernel of each quotient map is a proper quotient module of $V$. Since $\gd(DV) < \gd(V)$, the induction hypothesis tells us that $DV$ has (P). Since (P) is glueable, it suffices to show that those kernels of quotient maps have (P). We repeat the above step for these kernels, and show that this procedure must stop after finitely many steps, relying on the conditions that $S$ is a maximal nil subset (let us emphasize that this is the only reason why we introduce this notion) and $\C$ is locally Noetherian over $k$.

Actually, for $d = 1$, we proved in \cite{GL2} that the existence of inductive functor guarantees the local Noetherianity of $\C$ over $k$. But at this moment we are not able to generalize this conclusion for the case $d > 1$.

\subsection{A criterion for finiteness of Castelnuovo-Mumford regularity}

As a useful application of the above formalistic theorem, we provide a sufficient criterion for the finiteness of Castelnuovo-Mumford regularity, an important homological invariant in commutative algebra and representation theory of graded rings.

Let $\C$ be a locally finite, locally bounded graded category, and let $\mm$ be the $k$-module
\begin{equation*}
\bigoplus _{x < y} k\C(x, y).
\end{equation*}
By the directed structure of $\C$, the $k$-module $\mm$ is a two-sided ideal of the \emph{category algebra} $k\C$ (defined in \cite{Webb}). Using this ideal, one can define \emph{homology groups} of $\C$-modules as follows. Given a $\C$-module $V$, its zeroth homology group is
\begin{equation*}
H_0(V) = k\C/\mm \otimes_{k\C} V,
\end{equation*}
which is still a $\C$-module. Explicitly, for every object $x$, one has
\begin{equation*}
(H_0(V))_x = V_x / \sum_{y < x} (k\C(y, x) \cdot V_y).
\end{equation*}
Therefore, $H_0(V)$ actually provides a lot of information on generators of $V$. For example, if $(H_0(V))_x$ is nonzero, we can conclude that any set of \emph{homogeneous} generators $\{v_1, \, \ldots, \, v_n \}$ of $V$ must intersect $V_x$ nontrivially, where homogeneous means that for each $v_i$, $i \in [n]$, there is an object $x_i$ such that $v_i \in V_{x_i}$.

Since the functor $k\C/\mm \otimes_{k\C} -$ is right exact, it has left derivative functors. Correspondingly, for $i \geqslant 1$, one defines the $i$-th homology group of $V$ by
\begin{equation*}
H_i(V) = \Tor_i^{k\C} (k\C/\mm, V).
\end{equation*}
Since the category $\CMod$ has enough projectives, computation of homology groups can be carried out by choosing a projective resolution of $V$. Therefore, these homology groups are closely related to homogenous generators of ``syzygies" of $V$.

For any integer $i \geqslant 0$, we define the \emph{$i$-th homological degree} $\hd_i(V)$ of $V$ by
\begin{equation*}
\hd_i(V) = \left\{ \begin{array}{ll}
\sup \left\{ |x| \bigm| (H_i(V))_x \neq 0 \right\} & \mbox{ if } H_i(V)\neq 0,\\
-1 & \mbox{ else. }
\end{array}\right.
\end{equation*}
We define the \emph{regularity} of $V$ by
\begin{equation*}
\reg(V) = \sup\left\{ \hd_i(V)-i \bigm| i\geqslant 0 \right\}.
\end{equation*}
In particular, one can easily deduce from the definition that
\begin{equation*}
\gd(V) = \hd_0(V).
\end{equation*}
For example, one has $\gd(M(x)) = |x|$. Moreover, for any $\C$-module $V$, there exists a surjective homomorphism $P\to V$ where $P$ is of the form $\bigoplus_{i \in I} M(x_i)$ and $\gd(P)=\gd(V)$.

We should mention that our conventions for homological degrees, generating degrees, and regularity differ from some papers; see \cite[Remark 1.9]{GL2}.

Let us pause for while, using the category $\CN^d$ to explain the above construction as well as its potential close relation to a classical construction in commutative algebra. Let $A = k[X_1, \ldots, X_d]$ be the polynomial algebra of $d$ indeterminates over $k$. Note that $A$ is a multi-graded algebra. Moreover, it is not difficult to see that the category of \emph{multi-graded $A$-modules} (see for instance \cite{BH, CDeno}) is equivalent to the category of $\CN^d$-modules. Furthermore, the homology groups, homological degrees, and regularity of a $\CN$-module $V$ defined in our way, coincide with the usual definitions in commutative algebra (using the ideal consisting of polynomials with zero constant in $A$) when viewing $V$ as an $A$-module.

The category $\CN^d$ has a another interpretation in elementary number theory. Let $p_1, \ldots, p_d$ be $d$ distinct prime numbers and let $\mathcal{P}$ be a set of positive integers such that every element in $\mathcal{P}$ is either 1 or can be divided by a prime number $p_i$. Define a partial ordering $\preccurlyeq$ on $\mathcal{P}$ such that $n \preccurlyeq m$ if and only if $n$ divides $m$. For $n \in \mathcal{P}$, its rank $|n| = a_1 + \ldots + a_d$ where $n = p_1^{a_1} \ldots p_d^{a_d}$. Then the reader can check that the poset $\mathcal{P}$, viewed as a graded category, is isomorphic to $\CN^d$.

Now we describe the second main result of this paper.

\begin{theorem} \label{main theorem II}
Let $\C$ be a locally finite, locally bounded graded category equipped with an inductive functor $F$ of multiplicity $d$, and let $k$ be a commutative Noetherian ring. Suppose that $\C$ is locally Noetherian over $k$. If for every $x \in \Ob \C$, $\reg(FM(x)) \leqslant |x|$, then $\reg(V) < \infty$ for every finitely generated $\C$-module $V$.
\end{theorem}

The basic idea to prove this theorem, is to show that under the given conditions, the property of having finite regularity is maximal nil dominant since clearly it is glueable. We will show that this property is nil dominant; that is, for any nil subset $S$ (might not be a maximal nil subset) of $V$, if $\reg(D_SV)$ is finite, so is $\reg(V)$.

\subsection{Finiteness of regularity of representations of certain combinatorial categories}

Let us turn to the combinatorial categories described in Table 1. The locally Noetherian property of them are known. In particular, the locally Noetherian property of $\CN^d$ follows from Hilbert's Basis Theorem since the category of finitely generated $\CN^d$-modules is equivalent to the category of finitely generated multi-graded modules of polynomial rings of $d$ indeterminates. The locally Noetherian property of $\FI^d$, $\OI^d$, $\FI_d$, $\OI_d$, and $\VI_q$ can be proved using the method of Sam and Snowden in \cite{SS} (and Putman-Sam \cite{PS} in the case of $\VI_q$). In contrast, one does not have much information about the regularity of finitely generated modules of these categories. A result \cite[Theorem A]{CE} proved by Church and Ellenberg implies that finitely generated $\FI$-modules $V$ have finite regularity bounded by $\gd(V) + \hd_1(V) - 1$. This result was generalized to $\FI_G$ in \cite{L2} by a different approach. Recently, Sam and Snowden proved in \cite{SS2} that for arbitrary $d \geqslant 1$, finitely generated $\FI_d$-modules $V$ over a field of characteristic 0 have finite regularity, which is bounded by a function in terms of the first several homological degrees of $V$. However, an explicit formula of this function was not described. Furthermore, their approach cannot extend to the situation of positive characteristics as it highly relies on the representation theory of symmetric groups. In this paper we construct various inductive functors for the module categories of these categories (except $\VI_q$), and verify that they have the desired properties described in Theorem \ref{main theorem II}. Therefore, we can uniformly show:

\begin{theorem} \label{main theorem III}
Let $k$ be a commutative Noetherian ring, and let $\C$ be one of the following categories:
\begin{equation*}
\FI^d, \, \OI^d, \, \FI_d, \, \OI_d.
\end{equation*}
Let $V$ be a finitely generated $\C$-module. Then $\reg(V) < \infty$.
\end{theorem}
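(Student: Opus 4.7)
The plan is to apply Theorem \ref{main theorem II} to each of the four categories separately. Local Noetherianity over the commutative Noetherian ring $k$ is already established for all of them (see \cite{SS}), and the hypothesis $\reg(M(x)) \leqslant |x|$ is automatic: since $M(x) = k\C(x, -)$ is projective, $H_i(M(x)) = 0$ for $i \geqslant 1$, hence $\reg(M(x)) = \hd_0(M(x)) = \gd(M(x)) = |x|$. So the only genuine task is to produce an inductive functor on $\CMod$ for each $\C \in \{\FI^d, \OI^d, \FI_d, \OI_d\}$.

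I would construct these uniformly by finding a self-embedding functor $\iota \colon \C \to \C$ and letting $F = \iota^{\ast}$ be the induced pullback (pre-composition) functor on modules, which is automatically exact. For $\FI_d$ and $\OI_d$, take $\iota([n]) = [n+1]$, carrying a morphism $(f, c) \colon [n] \to [m]$ to the morphism $[n+1] \to [m+1]$ that sends the newly adjoined element of $[n+1]$ to the newly adjoined element of $[m+1]$ and leaves $c$ unchanged. The $d$ canonical morphisms $[n] \hookrightarrow [n+1]$ that add the new point with one of $d$ possible colors then yield $d$ natural transformations $\mathrm{Id} \to F$, assembling into $\theta \colon \mathrm{Id}^{\oplus d} \to F$ of multiplicity $d$. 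For $\FI^d$ and $\OI^d$, take $\iota$ to shift a single chosen coordinate, modeled on the $\FI$ shift functor of \cite{CEF, CEFN}, giving multiplicity $1$.

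The injectivity of $\theta_{M(x)}$ and finite generation of $FM(x)$ follow from an explicit combinatorial decomposition of $FM(x) = k\C(x, \iota(-))$: stratifying morphisms $x \to \iota(y)$ according to whether, and with which color, the newly adjoined point of $\iota(y)$ lies in the image produces a finite filtration whose associated pieces are free $\C$-modules, and $\theta_{M(x)}$ is identified with the inclusion of the distinguished pieces. The crucial axiom, which I expect to be the main obstacle, is the generating-degree drop $\gd(DV) = \gd(V) - 1$. The inequality $\gd(DV) \leqslant \gd(V) - 1$ amounts to showing that every top-degree element of $FV$ already lies in the image of $\theta_V$ on strictly lower-degree elements of $V^{\oplus d}$; this reduces to the combinatorial statement that every morphism $\iota(y) \to z$ with $|z|$ maximal factors through one of the $d$ distinguished morphisms $y \to \iota(y)$, using $|\iota(y)| = |y| + 1$. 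The reverse inequality, needed for a strict drop rather than a collapse, is argued by exhibiting an explicit nonvanishing generator of $DV$ in degree $\gd(V) - 1$ via category-specific witnesses. Once these checks are complete for each of the four categories, Theorem \ref{main theorem II} immediately yields $\reg(V) < \infty$ for every finitely generated $\C$-module $V$, completing the proof.
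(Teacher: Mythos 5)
Your construction for $\FI_d$ and $\OI_d$ is exactly the paper's (Lemma \ref{lemma for FI_d}): the shift $\iota\colon [n]\mapsto[n+1]$, the $d$ colored inclusions giving $\theta\colon \mathrm{Id}^{\oplus d}\to\Sigma$, and the stratification of morphisms $[n]\to[r+1]$ by the fate of the new point. The genuine gap is in your treatment of $\FI^d$ and $\OI^d$. Pulling back along a shift of a single chosen coordinate, with its one natural transformation $\mathrm{Id}\to\Sigma_1$, is \emph{not} an inductive functor: the axiom $\gd(DV)=\gd(V)-1$ fails outright, not just in the direction you flag as delicate. Take $d=2$ and $V=M(\bfn)$ with $\bfn=([0],[1])$. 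Since the first coordinate of $\bfn$ is empty, postcomposition with the canonical inclusion $\mathbf{m}\to\mathbf{m}+\mathbf{1}_1$ induces a bijection $\FI^2(\bfn,\mathbf{m})\to\FI^2(\bfn,\mathbf{m}+\mathbf{1}_1)$ for every $\mathbf{m}$, so $\theta_V\colon V\to\Sigma_1V$ is an isomorphism and $DV=0$, while $\gd(V)=1$; thus $\gd(DV)=-1\neq\gd(V)-1$. More generally, any module generated along the coordinates other than the chosen one is invisible to a one-coordinate shift, so the ``explicit nonvanishing generator of $DV$ in degree $\gd(V)-1$'' you hope to exhibit simply does not exist, and Theorem \ref{main theorem II} cannot be invoked. (The problem is not the multiplicity --- Theorem \ref{main theorem II} accepts any multiplicity --- but that this particular functor violates the last axiom.) The paper repairs precisely this point by taking $\boldsymbol{\Sigma}=\Sigma_1\oplus\cdots\oplus\Sigma_d$ with $\theta\colon\mathrm{Id}^{\oplus d}\to\boldsymbol{\Sigma}$ assembled from all $d$ coordinate inclusions, an inductive functor of multiplicity $d$ satisfying
\begin{equation*}
\boldsymbol{\Sigma} M(\bfn) \cong M(\bfn)^{\oplus d} \oplus \Big( \bigoplus_{n_i \geqslant 1} M(\bfn - \mathbf{1}_i)^{\oplus n_i} \Big)
\end{equation*}
for $\FI^d$ (and the analogous formula without the multiplicities $n_i$ for $\OI^d$); the degree drop then holds because every coordinate direction gets shifted, with the verification carried out in \cite{LY2}.

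A secondary caution: you dismiss the regularity hypothesis as automatic because $M(x)$ is projective. That addresses the hypothesis only as literally written in Theorem \ref{main theorem II}; the proof runs through Proposition \ref{bounding hd}, whose hypothesis is $\reg(FM(x))\leqslant|x|$, and this is the condition that genuinely needs checking. It is exactly what your combinatorial stratification of $FM(x)$ delivers --- a finite decomposition (or filtration) by free modules generated in degrees $\leqslant|x|$, e.g. $\Sigma M([n])\cong M([n])^{\oplus d}\oplus M([n-1])^{\oplus n}$ for $\FI_d$ --- so keep that computation as part of the verification rather than treating the hypothesis as vacuous. With the $\FI^d$, $\OI^d$ inductive functor corrected as above and this condition checked for $\boldsymbol{\Sigma}M(\bfn)$, the rest of your plan (local Noetherianity from \cite{SS}, then Theorem \ref{main theorem II}) matches the paper and goes through.
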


Unfortunately, although the category of $\VI_q$ has a self-embedding functor, its combinatorial behavior is not good enough to give an inductive functor of multiplicity $d$ in the module category. That is, the last condition in the definition of inductive functors, $\gd(DV) = \gd(V) - 1$, does not hold. We hope that after suitable modification, an updated version of this inductive machinery can handle more categories including $\VI_q$. We also remark that in a recent paper \cite{Nag2}, Nagpal proved that a finitely generated $\VI_q$-module always has finite regularity in the non-describing characteristic case, while Gan and Li gave explicit upper bounds for the regularity in \cite{GL3}.

\subsection{Relative projective modules and minimal resolutions}

Let $k$ be a field, and let $B$ be a graded quotient algebra of $A = k[X_1, \ldots, X_d]$. Then every finitely generated graded $B$-module $V$ (or more generally, every bounded below graded $B$-module) has a minimal projective resolution $P^{\bullet} \to V \to 0$, through which one can investigate asymptotic behavior of syzygies of $V$. In particular, the homology groups of $V$ can be computed via the isomorphism $H_i(V) \cong H_0(P^i)$ for all $i \geqslant 0$. Furthermore, it has been proved by Avramov in \cite[Corollary 2]{Av} that if $B$ is a Koszul algebra, then truncations of finitely generated graded $B$-modules have linear minimal projective resolutions.

Similarly, if $k$ is a field of characteristic 0, then every finitely generated $\FI$-module has a minimal projective resolution. However, for arbitrary commutative Noetherian rings $k$, one cannot expect the existence of such resolutions. To overcome this obstacle, Nagpal proposed a generalization of projective modules for $\FI$ in \cite{Nag}, called \emph{$\sharp$-filtered modules}, \emph{filtered modules}, or \emph{relative projective modules} by different authors. It turns out that these special modules have similar homological behavior as projective $\FI$-modules, and were intensively applied to study homological properties of $\FI$-modules; see \cite{L2, LR, LY1, LY2, Nag, R1, R2}. In this paper we extend this notion to all the categories described in Table 1. \footnote{Actually, the idea and construction work for all small categories, but here we do not pursue this full generality as we are more interested in concrete applications.} In particular, we prove that a finitely generated module $V$ of one of these categories is relative projective if and only if its first homology group vanishes, and if and only if its higher homology groups all vanish. Moreover, for $n \geqslant \reg(V)$ (which is finite by Theorem \ref{main theorem III}), the \emph{truncated module} $\tau_n V$, a submodule of $V$ obtained by letting the values of $V$ on objects $x$ satisfying $|x| < n$ be 0, has a linear minimal relative projective resolution.

\begin{theorem} \label{main theorem IV}
Let $k$ be a commutative Noetherian ring, and let $\C$ be one of the following categories:
\begin{equation*}
\FI^d, \, \OI^d, \, \FI_d, \, \OI_d.
\end{equation*}
Let $V$ be a finitely generated $\C$-module. Then for $n \geqslant \reg(V)$, the truncated module $\tau_n V$ has a resolution $F^{\bullet} \to \tau_n V \to 0$ satisfying the following conditions: for $i \geqslant 0$,
\begin{enumerate}
\item $F^i$ is a relative projective module;
\item $H_0(F^i) \cong H_i(\tau_n V)$;
\item if $F^i \neq 0$, then $F^i$ is generated by its values on objects $x$ with $|x| = n + i$.
\end{enumerate}
In particular, if $H_i(\tau_n V) \neq 0$, then $\hd_i(\tau_n V) = n+i$.
\end{theorem}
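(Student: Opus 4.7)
The plan is to construct the resolution $F^\bullet$ inductively, establishing along the way the homology concentration $(H_i(\tau_n V))_x = 0$ for $|x|\neq n+i$ that reconciles conditions~(2) and~(3). The first observation is that $n \geqslant \reg(V) \geqslant \gd(V)$, so $V$ is generated in degrees $\leqslant n$. Using the combinatorial fact (specific to $\FI^d, \OI^d, \FI_d, \OI_d$) that every morphism $y \to x$ with $|y| < n < |x|$ factors through some object of rank $n$, one deduces that $\tau_n V$ is generated in degree exactly $n$; equivalently, $H_0(\tau_n V)$ is concentrated in degree $n$. This furnishes the base case.

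\textbf{Inductive construction.} Using the standard relative projectives introduced earlier in the paper, pick $F^0$ relative projective, generated in degree $n$, with a surjection $F^0 \twoheadrightarrow \tau_n V$ inducing an isomorphism on $H_0$. Let $K^0$ be its kernel. Inductively, suppose $F^0,\dots,F^{i-1}$ and the partial exact complex have been constructed with the desired properties, and let $K^{i-1}$ denote the current kernel. Applying the characterization of relative projectives by the vanishing of all higher homology (proved earlier in the paper), the long exact sequences for $\Tor_*^{k\C}(k\C/\mm,-)$ collapse into dimension-shifting isomorphisms
\begin{equation*}
H_j(K^{i-1}) \cong H_{i+j}(\tau_n V) \qquad (j \geqslant 0).
\end{equation*}
In particular $H_0(K^{i-1}) \cong H_i(\tau_n V)$. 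Since each $F^j$ is generated in degree $n+j$ and, as a standard relative projective, vanishes on objects of rank strictly below $n+j$, exactness of the partial complex in degrees $< n+i$ forces $K^{i-1}$ to be concentrated in degrees $\geqslant n+i$; combined with the factorization property, this gives that $K^{i-1}$ is generated in degree exactly $n+i$ and that $H_i(\tau_n V) \cong H_0(K^{i-1})$ is concentrated in degree $n+i$. Choose $F^i$ relative projective, generated in degree $n+i$, with a surjection onto $K^{i-1}$ that is an isomorphism on $H_0$; then $H_0(F^i) \cong H_i(\tau_n V)$, closing the induction. The ``In particular'' clause is immediate: if $H_i(\tau_n V) \neq 0$ then $F^i \neq 0$, so $H_i(\tau_n V) \cong H_0(F^i)$ lives in degree $n+i$, giving $\hd_i(\tau_n V) = n+i$.

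\textbf{Main obstacle.} The principal difficulty lies in ensuring at each inductive stage that $K^{i-1}$ is concentrated in degrees $\geqslant n+i$, not merely that its $H_0$ is. This rests on two combinatorial ingredients: (a) a standard relative projective generated in degree $m$ must vanish on objects of rank $< m$, which has to be extracted from the explicit construction of relative projectives for $\FI^d, \OI^d, \FI_d, \OI_d$; and (b) the factorization of morphisms through intermediate rank levels in these categories, which is needed both in the base step and in promoting ``$H_0$ in degree $n+i$'' to ``concentration in degree $n+i$.'' Once these ingredients are in place, the resolution terminates because Theorem~\ref{main theorem III} gives $\reg(V) < \infty$, hence $H_i(\tau_n V) = 0$ for $i$ sufficiently large, and the induction runs cleanly.
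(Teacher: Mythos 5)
There is a genuine gap in the inductive step: you never establish the upper bound $\gd(K^{i-1}) \leqslant n+i$, and the "factorization property" cannot supply it. Concentration of $K^{i-1}$ in degrees $\geqslant n+i$ (which does follow from your exactness argument, since the previous surjection is an isomorphism at its generating rank) only bounds the homology from below; nothing in the combinatorics of morphisms prevents $K^{i-1}$ from having generators, hence nonzero $H_0$, in degrees strictly larger than $n+i$. Saying that the concentration "combined with the factorization property" gives generation in degree exactly $n+i$ is circular: the statement that $H_i(\tau_n V)\cong H_0(K^{i-1})$ lives only in degree $n+i$ is precisely what has to be proven, and it is a homological bound, not a combinatorial one. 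Symptomatically, your argument only ever uses $n \geqslant \gd(V)$ (in the base case), whereas the hypothesis $n \geqslant \reg(V)$ must enter essentially. The paper closes exactly this gap \emph{before} starting the construction: from $0 \to \tau_n V \to V \to V/\tau_n V \to 0$ one gets $\hd_i(\tau_n V) \leqslant \max\{\hd_i(V),\, \hd_{i+1}(V/\tau_n V)\}$, with $\hd_i(V) \leqslant \reg(V)+i \leqslant n+i$, and $\hd_{i+1}(V/\tau_n V) \leqslant \deg(V/\tau_n V)+i+1 \leqslant n+i$ by Lemma \ref{reg of finitely supported modules} (whose proof needs the inductive functor via Corollary \ref{a corollary}). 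With $\hd_i(\tau_n V)\leqslant n+i$ in hand, the generating degree of each kernel is controlled by $\gd(K^{1}) \leqslant \max\{\gd(F^{0}),\, \hd_1(\tau_n V)\} \leqslant n+1$, and similarly at each later stage via the dimension-shift isomorphisms $H_j(K^{i})\cong H_{j+i}(\tau_n V)$ coming from Proposition \ref{characterizations of relative projective modules}; this is the step your proposal is missing.

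Two smaller points. First, your closing claim that finiteness of $\reg(V)$ forces $H_i(\tau_n V)=0$ for large $i$, so that "the resolution terminates", is both false in general (finite regularity bounds $\hd_i(V)-i$, it does not force eventual vanishing of homology, and the theorem does not assert a finite resolution) and unnecessary: the construction is carried out degree by degree and may well be infinite. Second, when you choose $F^i$ as "a relative projective generated in degree $n+i$ with a surjection that is an isomorphism on $H_0$", you should take the standard induced module $\bigoplus_{|x|=n+i} M(x)\otimes_{k\C(x,x)} (K^{i-1})_x$ (using that distinct objects of equal rank admit no morphisms between them), since your later concentration argument needs the surjection to be an isomorphism on values at rank $n+i$, which an arbitrary relative projective cover inducing an $H_0$-isomorphism delivers only because induced modules vanish below the rank of their inducing object; this is worth making explicit, as in the paper.
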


Intuitively, given a finitely generated module $V$ of one of the above categories, one can break $V$ into two pieces, one of which is only supported on finitely many objects, and the other piece has a ``Koszul" behavior. When $k$ is a field (for $\OI_d$ and $\OI^d$), or more strictly, a field of characteristic 0 (for $\FI_d$ and $\FI^d$), relative projective modules are precisely projective modules. Therefore, in this case, one concludes from the above theorem that truncated modules $\tau_n V$ for $n \geqslant \reg(V)$ have a linear minimal projective resolutions. That is, they are \emph{Koszul} modules in the classical sense.

\subsection{Questions}

There are still some important questions which we think deserve a further investigation.

\textbf{Question I:} As we mentioned before, for $d = 1$, the existence of inductive functors guarantees the local Noetherian property of $\C$ over a commutative Noetherian ring $k$; see \cite{GL2}. Does this conclusion still hold for $d > 1$? If so, one may not only remove the assumption that $\C$ is locally Noetherian from the statements of Theorems \ref{main theorem I} and \ref{main theorem II}, but also obtain an efficient method to prove the local Noetherianity of infinite combinatorial categories.

\textbf{Question II:} Let $V$ be a finitely generated $\VI_q$-module over a commutative Noetherian ring. Does $V$ has finite regularity? If so, then the conclusion of Theorem \ref{main theorem IV} holds for $\VI_q$ as well. In the non-describing case, this question has been answered by Nagpal in \cite{Nag2}. But in the other case, the answer is still unknown.

\textbf{Question III:} Can people explicitly describe an upper bound of $\reg(V)$ in terms of its first several homological degrees or other related homological invariants? For $\FI$, this has been solved by Church and Ellenberg in \cite[Theorem A]{CE}; that is, $\reg(V) \leqslant \gd(V) + \hd_1(V) - 1$. For $\VI_q$, in the non-describing case, Gan and Li proved the same upper bound in \cite{GL3}. But for $\FI_d$ and $\FI^d$ with $d > 1$, and $\OI^d$ and $\OI_d$ with $d \geqslant 1$, no satisfactory answers is known.

\subsection{Organizations}

The paper is organized as follows. In Section 2 we list a few preliminary results which will be widely used later. The inductive machinery based on inductive functors and the proof of Theorem \ref{main theorem I} are contained in Section 3. In Section 4 we apply this inductive machinery to prove Theorem \ref{main theorem II} by showing the property of having finite regularity is nil dominant. Concrete examples including $\FI_d$ and $\FI^d$ are studied in details in Section 5, where we construct inductive functors for them and show that these functors satisfy conditions in Theorem \ref{main theorem II}, and hence prove Theorem \ref{main theorem III}. Moreover, in this section we also consider relative projective modules, and construct minimal relative projective resolutions of truncated modules, proving Theorem \ref{main theorem IV}.

\section{Preliminaries}

In this section we deduce some preliminary results which will be used extensively in later sections. From now on we let $\C$ be a locally finite, locally bounded graded category whose module category $\CMod$ is equipped with an inductive functor $F$ of multiplicity $d$, $k$ be a commutative Noetherian ring, and $V$ be a $\C$-module. At this moment we do not required that $\C$ be locally Noetherian over $k$. That is, the category of finitely generated $\C$-modules might not be abelian.

Recall that for every subset $S \subseteq [d]$ ($S$ might be the empty set), there is a natural map
\begin{equation*}
\bigoplus_{s \in S} V^s \to V^{\oplus d} \to FV
\end{equation*}
where $V^s$ is the $s$-th copy of $V$ in the direct sum $V^{\oplus d}$, and $K_SV$ and $D_SV$ are the kernel and cokernel of this map respectively.

\begin{lemma} \label{basic lemma}
Let $V$ be a $\C$-module. One has:
\begin{enumerate}
\item $V$ is finitely generated if and only if $\gd(V) < \infty$ and $V_x$ is a finitely generated $k$-module for every $x \in \Ob \C$.
\item For every subset $S \subseteq [d]$, $\gd(D_SV) \leqslant \gd(V) \leqslant \gd(D_SV) + 1$.
\item If $V$ is finitely generated, so are $FV$ and $D_SV$ for all $S \subseteq [d]$.
\item For every short exact sequence $0 \to U \to V \to W \to 0$ of $\C$-modules, there is a long exact sequence
\begin{equation*}
0 \to K_SU \to K_SV \to K_SW \to D_SU \to D_SV \to D_SW \to 0.
\end{equation*}
In particular, if $S$ is a nil subset for $W$, then we have a short exact sequence
\begin{equation*}
0 \to D_SU \to D_SV \to D_SW \to 0.
\end{equation*}
\item If $V$ is a submodule of $P = \bigoplus_{i \in I} M(x_i)$, then for $S \subseteq [d]$, the natural map $\bigoplus_{s \in S} V^s \to FV$ is injective.
\item If the natural map $V^{\oplus d} \to FV$ is injective, and $P^{\bullet} \to V \to 0$ is a projective resolution such that each $P^i$ is a direct sum of free modules, then the complexes $FP^{\bullet} \to FV \to 0$ and $D_SP^{\bullet} \to D_SV \to 0$ are exact for every $S \subseteq [d]$.
\end{enumerate}
\end{lemma}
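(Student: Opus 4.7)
\emph{Strategy.} The six parts of the lemma are largely independent but share a small toolkit: naturality of $\theta$, exactness of $F$, injectivity of $\theta_{M(x)}$ for free modules $M(x)$, and the axiomatic identity $\gd(DV) = \gd(V) - 1$. My plan is to dispatch (1)--(4) by direct bookkeeping/diagram arguments and to concentrate technical care on (5), since (6) reduces to (5) plus a standard long-exact-homology argument.

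\emph{Parts (1)--(4).} For (1), the forward implication takes a finite homogeneous generating set $\{v_1,\ldots,v_n\}$ with $v_i \in V_{x_i}$: it forces $\gd(V) \leq \max_i |x_i|$, and each $V_x$ is spanned over $k$ by the elements $\alpha \cdot v_i$ with $\alpha \in \C(x_i,x)$, a finite set by local finiteness. Conversely, local boundedness yields only finitely many objects of rank $\leq \gd(V)$, so concatenating finite $k$-generating sets of each such $V_x$ produces a finite $\C$-module generating set. For (2), factor $\theta_V$ as $V^{\oplus d} \twoheadrightarrow I \hookrightarrow FV$ with $FV/I \cong DV$; then $\gd(I) \leq \gd(V^{\oplus d}) = \gd(V)$ and $\gd(DV) = \gd(V) - 1$ give $\gd(FV) \leq \gd(V)$, and since $D_S V$ is a quotient of $FV$ we obtain $\gd(D_S V) \leq \gd(V)$; conversely the surjection $D_S V \twoheadrightarrow DV$ induced by $\mathrm{Im}(\bigoplus_{s \in S} V^s) \subseteq \mathrm{Im}(V^{\oplus d})$ in $FV$ yields $\gd(V) - 1 = \gd(DV) \leq \gd(D_S V)$. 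For (3), a finite surjection $\bigoplus_{i=1}^{n} M(x_i) \twoheadrightarrow V$ induces by exactness of $F$ a surjection $\bigoplus_i FM(x_i) \twoheadrightarrow FV$, and each $FM(x_i)$ is finitely generated by axiom; thus $FV$, and hence its quotient $D_S V$, is finitely generated. For (4), apply the snake lemma to the diagram whose two short exact rows are $0 \to \bigoplus_s U^s \to \bigoplus_s V^s \to \bigoplus_s W^s \to 0$ and $0 \to FU \to FV \to FW \to 0$ (the first exact because direct sums preserve exactness, the second because $F$ is exact), with vertical maps the restrictions of $\theta$; this produces the claimed six-term exact sequence, which collapses to the displayed short exact sequence precisely when $K_S W = 0$.

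\emph{Parts (5)--(6) and main obstacle.} For (5), naturality of $\theta$ along $V \hookrightarrow P$ yields the commutative square
\[
\begin{array}{ccc}
\bigoplus_{s \in S} V^s & \longrightarrow & FV \\
\downarrow & & \downarrow \\
\bigoplus_{s \in S} P^s & \longrightarrow & FP
\end{array}
\]
whose right vertical map $FV \hookrightarrow FP$ is injective because $F$ is exact; it therefore suffices to prove the bottom map is injective. Writing $P = \bigoplus_{i \in I} M(x_i)$, every element of $\bigoplus_{s \in S} P^s$ has finite support and thus lies in $\bigoplus_{s \in S} P_J^s$ for some finite $J \subseteq I$, with $P_J = \bigoplus_{i \in J} M(x_i)$; exactness of $F$ gives $FP_J \hookrightarrow FP$, reducing the problem to finite $J$, where $F$ commutes with the finite direct sum and the map decomposes as a direct sum of the maps $M(x_i)^{\oplus |S|} \to FM(x_i)$, each injective (as a restriction of the injection $M(x_i)^{\oplus d} \to FM(x_i)$ supplied by the free-module axiom). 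For (6), exactness of $F$ immediately yields exactness of $FP^\bullet \to FV \to 0$; applying (5) to each $P^i$ (which is by hypothesis a direct sum of free modules) gives a short exact sequence of complexes $0 \to \bigoplus_s P^{\bullet,s} \to FP^\bullet \to D_S P^\bullet \to 0$, and the associated long exact homology sequence, combined with the assumed exact sequence $0 \to \bigoplus_s V^s \to FV \to D_S V \to 0$ in degree zero, forces $D_S P^\bullet$ to have vanishing homology in positive degrees and $H_0(D_S P^\bullet) \cong D_S V$. I expect the only genuine difficulty to lie in (5): the axioms grant exactness of $F$ but not preservation of arbitrary direct sums, and the finite-support direct-limit reduction above is the step that patches this gap.
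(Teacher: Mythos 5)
Your proposal is correct and takes essentially the same route as the paper's proof: (1)--(4) are argued by the same bookkeeping, the same factorization of $\theta_V$ through its image, and the same snake-lemma diagram, and (5) rests on the same naturality square reducing injectivity of $\bigoplus_{s\in S}V^s\to FV$ to that of $\bigoplus_{s\in S}P^s\to FP$. The only (harmless) deviations are that in (5) you spell out the finite-support/additivity reduction to the free modules $M(x_i)$, which the paper leaves implicit, and in (6) you invoke the long exact homology sequence of the short exact sequence of complexes $0\to\bigoplus_{s\in S}(P^{\bullet})^s\to FP^{\bullet}\to D_SP^{\bullet}\to 0$ instead of splicing the syzygy sequences $0\to Z^{i+1}\to P^i\to Z^i\to 0$ via (4) and (5) as the paper does; both are equivalent standard arguments.
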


\begin{proof}
(1): Suppose that $V$ is finitely generated. Then one may choose a finite set of elements $v_1, v_2, \ldots, v_n$ such that $V$ is the only submodule containing all these elements. Viewing $V$ as a $k$-module, we have the following decomposition
\begin{equation*}
V = \bigoplus _{x \in \Ob \C} V_x.
\end{equation*}
Clearly, one can choose a finite set of elements $x_1, \ldots, x_m$ such that
\begin{equation*}
v_i \in \bigoplus_{j \in [m]} V_{x_j}
\end{equation*}
for all $i \in [n]$. Therefore, $\gd(V) \leqslant \max \{ |x_j| \mid j \in [m] \}$ is finite.

Note that the category of finitely generated $\C$-modules has enough projectives. In particular, free modules $M(x)$ are finitely generated for $x \in \Ob \C$. Moreover, since $V$ is finitely generated, one can choose a surjection
\begin{equation*}
P = \bigoplus _{i \in I} M(x_i) \to V
\end{equation*}
with $I$ being a finite set. For each $x \in \Ob \C$, by the locally finite condition, $M(x_i)_x = k\C(x_i, x)$ is a free $k$-module with finite rank. Therefore, $P_x$ is a free $k$-module of finite rank, so $V_x$ is a finitely generated $k$-module.

Conversely, suppose that $\gd(V) < \infty$, and $V_x$ is a finitely generated $k$-module for each $x \in \Ob \C$. Note that $V$ is generated by elements in the direct sum
\begin{equation*}
\bigoplus _{|x| \leqslant \gd(V)} V_x.
\end{equation*}
Since $\gd(V)$ is finite, by the locally bounded condition, the set $\{ x \in \Ob \C \mid |x| \leqslant \gd(V) \}$ is a finite set, so the above direct sum is a finite direct sum. Furthermore, each $V_x$ is a finitely generated $k$-module, and hence a finitely generated $k\C(x, x)$-module. Therefore, for each object $x$ with $|x| \leqslant \gd(V)$, one can choose a finite set $S_x$ of elements in $V_x$, which generate $V_x$ under the action of the monoid $\C(x, x)$. Consequently, elements in the finite set
\begin{equation*}
\bigsqcup _{|x| \leqslant \gd(V)} S_x
\end{equation*}
generate $V$. That is, $V$ is finitely generated.

(2): We may assume that $V \neq 0$ since otherwise the conclusion holds trivially by our convention $\gd(0) = -1$. Note that if $W$ is a quotient module of $V$, then $H_0(W)$ is a quotient module of $H_0(V)$. Consequently, $\gd(W) = \hd_0(W) \leqslant \hd_0(V) = \gd(V)$.

Firstly we prove that $\gd(V) - 1 \leqslant \gd(FV) \leqslant \gd(V)$. The natural map $\theta_V: V^{\oplus d} \to FV$ gives rise to a short exact sequence $0 \to V' \to FV \to DV \to 0$, where $V'$ is the image of $\theta_V$. Therefore, $\gd(V) \geqslant \gd(V')$ and $\gd(FV) \geqslant \gd(DV) = \gd(V)-1$. Also, from the short exact sequence one deduces that
\begin{equation*}
\gd(FV) \leqslant \max \{\gd(V'), \, \gd(DV) \} \leqslant \gd(V).
\end{equation*}

Note that $D_SV$ is a quotient module of $FV$, so $\gd(D_SV) \leqslant \gd(FV) \leqslant \gd(V)$. On the other hand, the commutative diagram
\begin{equation*}
\xymatrix{
\bigoplus_{s \in S} V^s \ar[r] \ar[d] & FV \ar@{=}[d] \ar[r] & D_SV \ar[r] \ar[d] & 0\\
V^{\oplus d} \ar[r] & FV \ar[r] & DV \ar[r] & 0
}
\end{equation*}
tells us that $DV$ is a quotient module of $D_SV$. Thus $\gd(D_SV) \geqslant \gd(DV) = \gd(V) - 1$.

(3): Since $V$ is finitely generated, there is a surjection
\begin{equation*}
\bigoplus_{i \in I} M(x_i)^{a_i} \to V \to 0
\end{equation*}
with $I$ being a finite set and $a_i$ being finite numbers. Applying the exact functor $F$ one gets a surjection
\begin{equation*}
\bigoplus_{i \in I} FM(x_i)^{a_i} \to FV \to 0.
\end{equation*}
Since each $FM(x_i)$ is still finitely generated by the definition of inductive functors, so is $FV$. As we explained before, $D_SV$ are quotient modules of $FV$ for $S \subseteq [d]$, so are finitely generated as well.

(4): Applying $F$ to the given short exact sequence and using the natural transformation $\theta: \mathrm{Id}^{\oplus d} \to F$, we obtain a commutative diagram of short exact sequences
\begin{equation*}
\xymatrix{
0 \ar[r] & \bigoplus_{s \in S} U^s \ar[r] \ar[d] & \bigoplus_{s \in S} V^s \ar[r] \ar[d] & \bigoplus_{s \in S} W^s \ar[r] \ar[d] & 0\\
0 \ar[r] & FU \ar[r] & FV \ar[r] & FW \ar[r] & 0.
}
\end{equation*}
Now the snake lemma gives the wanted conclusion.

(5): The conclusion follows from the commutative diagram
\begin{equation*}
\xymatrix{
\bigoplus_{s \in S}V^s \ar[r] \ar[d] & \bigoplus_{s \in S} P^s \ar[d]\\
FV \ar[r] & FP
}
\end{equation*}
since the maps represented by the top arrow and the right vertical arrow are injective.

(6): Break the projective resolution $P^{\bullet} \to V \to 0$ into short exact sequences $0 \to Z^{i+1} \to P^i \to Z^i \to 0$ (where $Z^0 = V$), $i \geqslant 0$, and apply the conclusions of (4) and (5).
\end{proof}

\begin{remark} \normalfont
All conclusions stated in this lemma are generalized from the corresponding properties of shift functors of $\FI$-modules; see \cite{CEFN, LY1}.

Statement (3) asserts that functors $F$ and $D_S$ in $\CMod$ restrict to endofunctors in the category of finitely generated $\C$-modules. However, if $\C$ is not locally Noetherian over $k$, the kernel functors $K_S$ might not preserve finitely generated $\C$-module.

A $\C$-module, whose value on each object is a finitely generated $k$-module, is called \emph{locally finite}. Since $k$ is Noetherian, and for each $x \in \Ob \C$, the endomorphism algebra $k\C(x, x)$ is finite over $k$, the reader can see that the category of locally finite $\C$-modules is closed under kernels and cokernels. Therefore, this is an abelian category containing the category of finitely generated $\C$-modules. Statement (1) of the previous lemma tells us that a $\C$-module is finitely generated if and only if it is locally finite and has finite generating degree. However, we should remind the reader that this equivalence relies on the condition that $\C$ is locally bounded. If this is not true; that is, there are infinitely many distinct objects in $\C$ having the same rank, then a finitely generated $\C$-module is still locally finite and still has finite generating degree, but the converse statement fails, as explained in the following example.
\end{remark}

\begin{example} \normalfont
Let $\C = \CN^{\infty}$ be the following category: objects are $\bfn = (n_i)_{i \in \N}$ such that all but finitely many $n_i$ are 0; for two objects $\bfn$ and $\mathbf{m}$, there is a unique morphism from $\mathbf{m}$ to $\bfn$ if and only if $m_i \leqslant n_i$ for all $i \in \N$. Clearly, $\C$ is a locally finite graded category whose rank function is given by $|\bfn| = \sum_{i \in \N} n_i$. However, it is not locally bounded since for every positive integer $n$, there are infinitely many distinct objects whose rank is $n$. The category $\CN^{\infty}$ is isomorphic to the following poset $\mathcal{P}$ (viewed as a category in a natural way): elements of $\mathcal{P}$ are positive integers, and $m \preccurlyeq n$ if and only if $m$ divides $n$. Since every positive integer can be written uniquely as a finite product of powers of primes, this decomposition gives rises to an isomorphism from $\mathcal{P}$ to $\CN^{\infty}$ as graded categories.

From the viewpoint of representation theory, these two categories are of special interest, since their module categories are (up to equivalence) subcategories of the category of positively graded modules of the polynomial ring with countably infinitely many indeterminates. Also note that $\CN^{\infty}$ is not locally Noetherian: Let $V$ be the following submodule of $k\CN^{\infty} (\mathbf{0}, -)$ (clearly finitely generated) such that $V_{\bfn} = k$ for all $\bfn \neq \mathbf{0}$, while all linear injections from $V_{\mathbf{m}} = k \to k = V_{\bfn}$ are the identity map. Then $\gd(V) = 1$ is finite, and $V$ is locally finite, but it is not finitely generated.
\end{example}

\section{Inductive machinery}

In this section we establish formal inductive machinery, which is the foundation of all applications described in later sections. From now on we assume that the locally finite, locally bounded graded category $\C$ is locally Noetherian over $k$; that is, the category of finitely generated $\C$-modules is abelian. We also assume that the $\C$-module $V$ is finitely generated.

\subsection{Nil subsets}

Recall that for every subset $S \subseteq [d]$, there is an exact sequence
\begin{equation*}
0 \to K_SV \to \bigoplus_{s \in S} V^s \to FV \to D_SV \to 0.
\end{equation*}
We let $D = D_{[d]}$ and $K = K_{[d]}$.  To simplify the notation, we let
\begin{equation*}
V^S = \bigoplus _{s \in S} V^s.
\end{equation*}
Note that $K_SV = KV \cap V^S$. Therefore, if $S \subseteq T \subseteq [d]$, then
\begin{equation*}
K_SV = KV \cap V^S \subseteq KV \cap V^T = K_TV.
\end{equation*}
Furthermore, for two subsets $S, T \subseteq [d]$, one has
\begin{equation*}
K_SV \cap K_TV = (KV \cap V^S) \cap (KV \cap V^T) = KV \cap (V^S \cap V^T) = KV \cap V^{S \cap T} = K_{S \cap T} V.
\end{equation*}

Let $V$ be a $\C$-module, and let $S\subseteq [d]$. Recall that $S$ is a \emph{nil} subset for $V$ if $K_SV = 0$. It is a \emph{maximal nil} subset for $V$ if $K_S V=0$ and $K_T V\neq 0$ for every $T\subseteq [d]$ which properly contains $S$.

Note that nil subsets always exist since the empty set is a nil subset.

\subsection{Filtrations}

Given any subset $S \subseteq [d]$, there is a strictly increasing chain
\begin{equation*}
S = S_0 \subset S_1 \subset \ldots \subset S_{d-|S|} = [d]
\end{equation*}
such that $|S_{i+1}| = |S_i| + 1$ for $0 \leqslant i \leqslant d - |S| - 1$. By the argument in the previous subsection, there is a corresponding increasing chain
\begin{equation*}
K_S V \subseteq K_{S_1} V \subseteq \ldots \subseteq K_{S_{d-|S|}} V = KV.
\end{equation*}
Note that this chain might not be strictly increasing. That is, it is possible that $K_{S_i} V = K_{S_{i+1}} V$ for a certain $i$.

Now we describe a sequence of quotient maps starting from $D_SV$ and ending at $DV$. For each $S_i$ in the above chain, the exact sequence $0 \to K_{S_i} V \to V^{S_i} \to FV \to D_{S_i} V \to 0$ induces the following commutative diagrams:
\begin{equation*}
\xymatrix{
0 \ar[r] & K_{S_i} V \ar[r] \ar[d] & V^{S_i} \ar[r] \ar[d] & V^{S_i} / K_{S_i} V \ar[r] \ar[d]^{\delta} & 0\\
0 \ar[r] & K_{S_{i+1}} V \ar[r] & V^{S_{i+1}} \ar[r] & V^{S_{i+1}} / K_{S_{i+1}} V \ar[r] & 0
}
\end{equation*}
and
\begin{equation*}
\xymatrix{
0 \ar[r] & V^{S_i} / K_{S_i} V \ar[r] \ar[d]^{\delta} & FV \ar[r] \ar@{=}[d] & D_{S_i} V \ar[r] \ar[d]^{\epsilon} & 0\\
0 \ar[r] & V^{S_{i+1}} / K_{S_{i+1}} V \ar[r] & FV \ar[r] & D_{S_{i+1}} V \ar[r] & 0.
}
\end{equation*}
Applying the snake lemma to the second commutative diagram, we know that $\delta$ is injective, and the kernel of $\epsilon$ is isomorphic to the cokernel of $\delta$, which is
\begin{equation*}
(V^{S_{i+1}} / K_{S_{i+1}} V) / (V^{S_i} / K_{S_i} V) \cong V^{S_{i+1}} / (V^{S_i} + K_{S_{i+1}}V);
\end{equation*}
while applying the snake lemma to the first commutative diagram, we know that the cokernel of $\delta$ is isomorphic to
\begin{equation*}
(V^{S_{i+1}} / V^{S_i}) / (K_{S_{i+1}} V / K_{S_i} V) \cong V/ (K_{S_{i+1}} V / K_{S_i} V),
\end{equation*}
a quotient module of $V$. Thus we proved the following lemma:

\begin{lemma} \label{filtration}
Let $V$ be a finitely generated $\C$-module, and $S$ be a subset of $[d]$. Then there is a sequence of quotient maps
\begin{equation*}
D_SV \to D_{S_1} V \to \ldots \to D_{S_{d - |S| - 1}} V \to D_{S_{d - |S|}} V = DV
\end{equation*}
such that for $0 \leqslant i \leqslant d - |S| - 1$, the kernel of $D_{S_i} V \to D_{S_{i+1}} V$ is isomorphic to
\begin{equation*}
V^{S_{i+1}} / (V^{S_i} + K_{S_{i+1}}V) \cong V/ (K_{S_{i+1}} V / K_{S_i} V).
\end{equation*}
\end{lemma}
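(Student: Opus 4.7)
The plan is to prove the lemma by applying the snake lemma twice, exactly as the setup in the paragraphs preceding the statement already suggests. First I would fix any strictly increasing chain $S = S_0 \subset S_1 \subset \cdots \subset S_{d-|S|} = [d]$ with $|S_{i+1}| = |S_i|+1$, obtained by adjoining one element of $[d] \setminus S$ at each step. Using the identity $K_SV = KV \cap V^S$, the inclusion $V^{S_i} \hookrightarrow V^{S_{i+1}}$ yields at once $K_{S_i}V \subseteq K_{S_{i+1}}V$, together with the canonical isomorphism $V^{S_{i+1}}/V^{S_i} \cong V$ (there is only one extra summand).

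Next I would use the four-term exact sequence $0 \to K_{S_i}V \to V^{S_i} \to FV \to D_{S_i}V \to 0$ to extract the short exact sequence
\begin{equation*}
0 \to V^{S_i}/K_{S_i}V \to FV \to D_{S_i}V \to 0
\end{equation*}
and stack the versions for $i$ and $i+1$ into a commutative diagram whose middle column is the identity on $FV$, with left vertical map $\delta \colon V^{S_i}/K_{S_i}V \to V^{S_{i+1}}/K_{S_{i+1}}V$ induced by $V^{S_i} \hookrightarrow V^{S_{i+1}}$. Applying the snake lemma here, I get that $\delta$ is injective, that the induced map $D_{S_i}V \to D_{S_{i+1}}V$ is surjective, and that its kernel is canonically identified with $\operatorname{coker}(\delta) \cong V^{S_{i+1}}/(V^{S_i}+K_{S_{i+1}}V)$. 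Composing the resulting surjections for $i = 0, 1, \ldots, d-|S|-1$ gives the required chain of quotient maps $D_SV \to D_{S_1}V \to \cdots \to DV$.

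Finally, to identify this cokernel with $V/(K_{S_{i+1}}V/K_{S_i}V)$, I would apply the snake lemma a second time to the diagram whose rows are
\begin{equation*}
0 \to K_{S_i}V \to V^{S_i} \to V^{S_i}/K_{S_i}V \to 0
\end{equation*}
and the analogous row at level $i+1$, with vertical maps being the natural inclusions on the two left columns and $\delta$ on the right. The cokernels of the first two columns are $K_{S_{i+1}}V/K_{S_i}V$ and $V^{S_{i+1}}/V^{S_i} \cong V$ respectively, so the snake lemma delivers $\operatorname{coker}(\delta) \cong V/(K_{S_{i+1}}V/K_{S_i}V)$.

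There is no real obstacle here: once the two diagrams are set up, the entire argument is diagram chasing via the snake lemma, and the only substantive points to check are the naturality of the maps (which is clear from $\theta \colon \mathrm{Id}^{\oplus d} \to F$ being a natural transformation and from $V^S \hookrightarrow V^T$ being natural in $V$ for $S \subseteq T$) and the compatibility $V^{S_{i+1}}/V^{S_i} \cong V$, which follows from $|S_{i+1}| = |S_i|+1$. Nothing in the argument requires $V$ to be finitely generated or $\C$ to be locally Noetherian, so the lemma can in fact be stated in that generality, but I would record it as written since that is how it will be applied.
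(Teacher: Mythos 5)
Your proposal is correct and matches the paper's own argument essentially verbatim: the same two commutative diagrams (the one with middle column $FV$ and the one built from $0 \to K_{S_i}V \to V^{S_i} \to V^{S_i}/K_{S_i}V \to 0$) with the snake lemma applied to each, yielding first $\ker(D_{S_i}V \to D_{S_{i+1}}V) \cong \operatorname{coker}(\delta) \cong V^{S_{i+1}}/(V^{S_i}+K_{S_{i+1}}V)$ and then $\operatorname{coker}(\delta) \cong V/(K_{S_{i+1}}V/K_{S_i}V)$. Your side remark that neither finite generation of $V$ nor local Noetherianity is needed here is also accurate.
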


The following example illustrate our construction.

\begin{example} \normalfont \label{example}
Let $A = k[X, Y]$, $I = (XY)$, and $V = A/I$. As we mentioned before, the graded $A$-module corresponds to a $\C = \CN^2$-module of the following form:
\begin{equation*}
\begin{matrix}
\ldots & \to & \ldots & \to & \ldots & \to & \ldots & \to & \ldots\\
\uparrow & & \uparrow & & \uparrow & & \uparrow & & \ldots \\
k & \to & 0 & \to & 0 & \to & 0 & \to & \ldots\\
\uparrow & & \uparrow & & \uparrow & & \uparrow & & \ldots \\
k & \to & 0 & \to & 0 & \to & 0 & \to & \ldots\\
\uparrow & & \uparrow & & \uparrow & & \uparrow & & \ldots\\
k & \to & 0 & \to & 0 & \to & 0 & \to & \ldots\\
\uparrow & & \uparrow & & \uparrow & & \uparrow & & \ldots\\
k & \to & k & \to & k & \to & k & \to & \ldots
\end{matrix}
\end{equation*}
where all maps from $k$ to $k$ are identity.

The category $\C$ has two self-embedding functors defined by
\begin{equation*}
\iota_1: [m] \times [n] \mapsto [m+1] \times [n]
\end{equation*}
and
\begin{equation*}
\iota_2: [m] \times [n] \mapsto [m] \times [n+1],
\end{equation*}
which induce two shift functors $\Sigma_1$ and $\Sigma_2$ in $\CMod$ via pull-back. We will show that the functor $\Sigma = \Sigma_1 \oplus \Sigma_2$ is an inductive functor. Thus we can describe $KV$ and $\Sigma V$ as follows:
\begin{align*}
KV = &
\begin{matrix}
\ldots & \to & \ldots & \to & \ldots & \to & \ldots & \to & \ldots\\
\uparrow & & \uparrow & & \uparrow & & \uparrow & & \ldots \\
k & \to & 0 & \to & 0 & \to & 0 & \to & \ldots\\
\uparrow & & \uparrow & & \uparrow & & \uparrow & & \ldots \\
k & \to & 0 & \to & 0 & \to & 0 & \to & \ldots\\
\uparrow & & \uparrow & & \uparrow & & \uparrow & & \ldots\\
k & \to & 0 & \to & 0 & \to & 0 & \to & \ldots\\
\uparrow & & \uparrow & & \uparrow & & \uparrow & & \ldots\\
0 & \to & 0 & \to & 0 & \to & 0 & \to & \ldots
\end{matrix}
& \bigoplus \quad &
\begin{matrix}
\ldots & \to & \ldots & \to & \ldots & \to & \ldots & \to & \ldots\\
\uparrow & & \uparrow & & \uparrow & & \uparrow & & \ldots \\
0 & \to & 0 & \to & 0 & \to & 0 & \to & \ldots\\
\uparrow & & \uparrow & & \uparrow & & \uparrow & & \ldots \\
0 & \to & 0 & \to & 0 & \to & 0 & \to & \ldots\\
\uparrow & & \uparrow & & \uparrow & & \uparrow & & \ldots\\
0 & \to & 0 & \to & 0 & \to & 0 & \to & \ldots\\
\uparrow & & \uparrow & & \uparrow & & \uparrow & & \ldots\\
0 & \to & k & \to & k & \to & k & \to & \ldots
\end{matrix}
\end{align*}
\begin{align*}
\Sigma V = &
\begin{matrix}
\ldots & \to & \ldots & \to & \ldots & \to & \ldots & \to & \ldots\\
\uparrow & & \uparrow & & \uparrow & & \uparrow & & \ldots \\
0 & \to & 0 & \to & 0 & \to & 0 & \to & \ldots\\
\uparrow & & \uparrow & & \uparrow & & \uparrow & & \ldots \\
0 & \to & 0 & \to & 0 & \to & 0 & \to & \ldots\\
\uparrow & & \uparrow & & \uparrow & & \uparrow & & \ldots\\
0 & \to & 0 & \to & 0 & \to & 0 & \to & \ldots\\
\uparrow & & \uparrow & & \uparrow & & \uparrow & & \ldots\\
k & \to & k & \to & k & \to & k & \to & \ldots
\end{matrix}
& \bigoplus \quad &
\begin{matrix}
\ldots & \to & \ldots & \to & \ldots & \to & \ldots & \to & \ldots\\
\uparrow & & \uparrow & & \uparrow & & \uparrow & & \ldots \\
k & \to & 0 & \to & 0 & \to & 0 & \to & \ldots\\
\uparrow & & \uparrow & & \uparrow & & \uparrow & & \ldots \\
k & \to & 0 & \to & 0 & \to & 0 & \to & \ldots\\
\uparrow & & \uparrow & & \uparrow & & \uparrow & & \ldots\\
k & \to & 0 & \to & 0 & \to & 0 & \to & \ldots\\
\uparrow & & \uparrow & & \uparrow & & \uparrow & & \ldots\\
k & \to & 0 & \to & 0 & \to & 0 & \to & \ldots
\end{matrix}
\end{align*}
while $DV = 0$. Using the language of $A$-modules, $KV = (\bar{Y}) \oplus (\bar{X})$, $\Sigma V \cong A/(Y) \oplus A/(X)$.

Consider the following increasing chain: $S_0 = \emptyset \subset S_1 = \{1 \} \subset S_2 = \{1, 2 \}$. Then the increasing chain of $K_{S_i} V$ is
\begin{equation*}
K_{S_0} V = 0 \subset K_{S_1} V \cong (\bar{Y}) \subset KV = (\bar{Y}) \oplus (\bar{X}),
\end{equation*}
and
\begin{equation*}
D_{S_0} V = \Sigma V \cong A/(Y) \oplus A/(X) \to D_{S_1} V \cong A/(X) \to DV = 0
\end{equation*}
is the desired sequence of quotient maps. Note that these quotient maps are split since $\Sigma$ is a direct sum of two functors. This is not the case in general.
\end{example}

The sequence of quotient maps described in Lemma \ref{filtration} might not be optimal, in the sense that for some $i$, the kernel of $D_{S_{i+1}}V \to D_{S_i} V$ is isomorphic to $V$, or equivalently, $K_{S_i}V = K_{S_{i+1}}V$. However, if we start from a maximal nil subset for $V$, then the chain $K_{S_i}V \subseteq K_{S_{i+1}} V$ is strictly increasing, and hence the kernels of quotient maps $D_{S_i} V \to D_{S_{i+1}}V$ are always proper quotients of $V$; that is, these kernels can not be isomorphic to $V$ (see Lemma 3.1). This is crucial for us to carry out the inductive machinery.

\begin{lemma} \label{jump}
Let $V$ be a $\C$-module, and let $S$ be a maximal nil subset for $V$. Let $T$ and $T'$ be subsets of $[d]$ such that $S \subseteq T \subseteq T'$. If $K_TV = K_{T'}V$, then $T = T'$.
\end{lemma}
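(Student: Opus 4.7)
The plan is to argue by contradiction. Suppose $T \subsetneq T'$. Translating the definitions using $K_U V = N \cap V^U$, where $N = \ker(\theta_V : V^{\oplus d} \to FV)$, the hypothesis becomes $N \cap V^T = N \cap V^{T'}$. I would first reduce to the case $|T' \setminus T| = 1$: pick any $s \in T' \setminus T$, set $T_1 = T \cup \{s\}$, and note that by the containment relations in Section 3.1 one has
\begin{equation*}
N \cap V^T \;\subseteq\; N \cap V^{T_1} \;\subseteq\; N \cap V^{T'},
\end{equation*}
so the equality of the outer two forces $N \cap V^T = N \cap V^{T_1}$. Replacing $T'$ by $T_1$, we may assume $T' = T \sqcup \{s\}$.

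Next, I would exploit the direct sum decomposition $V^{T'} = V^T \oplus V^s$. For any $v \in N \cap V^{T'}$, write $v = v_T + v_s$ accordingly. Since $v \in N \cap V^{T'} = N \cap V^T \subseteq V^T$, the component $v_s$ must vanish. In other words, the projection $\pi_s$ onto $V^s$ annihilates $N \cap V^{T'}$.

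Now I would use the maximal nil hypothesis on $S$ by producing a strictly larger nil subset. Set $S' = S \cup \{s\}$; since $S \subseteq T$ and $s \notin T$, we have $s \notin S$, so $S'$ properly contains $S$. To show $S'$ is nil, take $v' \in K_{S'}V = N \cap V^{S'}$. Because $V^{S'} = V^S \oplus V^s \subseteq V^T \oplus V^s = V^{T'}$, we have $v' \in N \cap V^{T'}$, so the preceding paragraph gives $\pi_s(v') = 0$, hence $v' \in V^S$. Therefore $v' \in N \cap V^S = K_S V = 0$, proving $K_{S'}V = 0$. This contradicts the maximality of $S$ and forces $T = T'$.

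The argument is essentially formal once one unwinds $K_U V = N \cap V^U$; no step looks like an obstacle, and the only subtlety is the reduction to adding a single element, which is needed because the direct sum decomposition $V^{T'} = V^T \oplus V^s$ is exactly what lets us recover a nil enlargement of $S$ from the equality $K_T V = K_{T'}V$.
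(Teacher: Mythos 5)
Your proof is correct and follows essentially the same route as the paper: assume $T \neq T'$, pick $s \in T' \setminus T$, and show that $S' = S \cup \{s\}$ is still nil for $V$, contradicting the maximality of $S$. The paper obtains $K_{S'}V = K_{S'}V \cap K_{T'}V = K_{S'}V \cap K_{T}V = K_{S'\cap T}V = K_S V = 0$ in one line from the identity $K_AV \cap K_BV = K_{A\cap B}V$ of Section 3.1, which renders your reduction to $|T'\setminus T| = 1$ and the projection argument unnecessary, though your elementwise verification is equally valid.
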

\begin{proof}
Suppose, on the contrary, that $T\neq T'$. Then there exists an element $t' \in T' \setminus T$. Let $S' = S \cup \{t'\}$. Since $S$ is a nil maximal subset for $V$, one has $K_{S'}V\neq 0$. But
\begin{equation*}
K_{S'}V = K_{S'}V\cap K_{T'}V =K_{S'}V\cap K_T V = K_{S'\cap T} V = K_S V = 0,
\end{equation*}
a contradiction.
\end{proof}

By this lemma, starting from a maximal nil subset $S$ of $V$, a strictly increasing chain
\begin{equation*}
S = S_0 \subset S_1 \subset S_2 \subset \ldots \subset S_{d-|S|} = [d]
\end{equation*}
of subsets (note that the strictly increasing condition implies that $|S_{i+1}| = |S_i| + 1$) induces a strictly increasing sequence of submodules of $KV$:
\begin{equation*}
K_S V \subset K_{S_1} V \subset \ldots \subset K_{S_{d-|S|}} V = KV.
\end{equation*}
Correspondingly, in the sequence of quotient maps described in Lemma \ref{filtration}
\begin{equation*}
D_SV \to D_{S_1} V \to \ldots \to D_{S_{d - |S| - 1}} V \to D_{S_{d - |S|}} V = DV,
\end{equation*}
all kernels are nonzero. We denote the kernel of $D_{S_i} V \to D_{S_{i+1}} V$ by $V^{(i)}$, which is isomorphic to
\begin{equation*}
V^{S_{i+1}} / (V^{S_i} + K_{S_{i+1}}V) \cong V/ (K_{S_{i+1}} V / K_{S_i} V),
\end{equation*}
a proper quotient module of $V$, and call $V^{(i)}$ a \emph{child} of $V$ if $V^{(i)}\neq 0$.

\subsection{A finite tree}

Now we construct a tree of \emph{descendants} of $V$, and show that it is a finite tree. We will use this tree as a bookkeeping device for our inductive machinery.

Recall that we defined a collections of children for $V$ in the previous subsection. Now choose a maximal nil subset for each child of $V$ and repeat the procedure described in the previous subsection, we obtain the children of each child of $V$. Recursively, we get a tree of quotient modules, depicted as below:
\begin{equation*}
\xymatrix{
 & & & & V \ar[llld] \ar[d] \ar[rrrd] & & & & \\
 & V^{(i)} \ar[ld] \ar[d] \ar[rd] & & & \cdots \ar[ld] \ar[d] \ar[rd] & & & V^{(j)} \ar[ld] \ar[d] \ar[rd] & \\
 \ldots & \ldots & \ldots & \ldots & \ldots & \ldots & \ldots & \ldots & \ldots
}
\end{equation*}
The arrows in this tree are the quotient maps. We denote this tree by $\mathcal{T}(V)$. It has a natural poset structure with $V$ the minimal element. Each $\C$-module $U$ in $\mathcal{T}(V)$ has at most $d$ children, and the quotient map from $U$ to any child of $U$ has nonzero kernel.

\begin{example} \normalfont
Let us continue Example \ref{example}. The module $V = k[X, Y]/(XY)$ has a unique maximal nil subset, the empty set. Therefore, it has two children, which are $k[X, Y]/(Y)$ and $K[X,Y]/(X)$ respectively. Each of these two children has a unique maximal nil subset, which are $\{1 \}$ and $\{2 \}$ respectively. An explicit computation tells us that they have no children. Therefore, we can describe $\mathcal{T}(V)$ as follows:
\begin{equation*}
\xymatrix{
 & V = k[X, Y]/(XY) \ar[dl] \ar[dr] & \\
V^{(1)} = K[X, Y]/(Y) & & V^{(2)} = K[X, Y]/(X)
}
\end{equation*}
\end{example}

\begin{lemma} \label{lifting in tree}
Let $U$ be a $\C$-module in $\mathcal{T}(V)$, and (P) be a property of certain $\C$-modules which is glueable and maximal nil dominant. If all children of $U$ and $DU$ satisfy (P), then $U$ satisfies (P) as well.
\end{lemma}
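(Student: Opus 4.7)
The plan is to perform a downward induction along the sequence of quotient maps furnished by Lemma \ref{filtration}, using the very maximal nil subset $S$ for $U$ that was used to produce the children of $U$ in $\mathcal{T}(V)$. With $S$ and its strictly increasing refinement $S = S_0 \subset S_1 \subset \cdots \subset S_{d-|S|} = [d]$ fixed, the discussion immediately after Lemma \ref{jump} yields, for each $0 \leqslant i \leqslant d - |S| - 1$, a short exact sequence
\[ 0 \to U^{(i)} \to D_{S_i} U \to D_{S_{i+1}} U \to 0, \]
in which every $U^{(i)}$ is nonzero, and hence a child of $U$, precisely because $S$ is maximal nil (this is the content of Lemma \ref{jump}).

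Next I would prove by downward induction on $i$ that each $D_{S_i} U$ satisfies (P). The base case $i = d - |S|$ holds since $D_{S_{d-|S|}} U = DU$ has (P) by hypothesis. For the inductive step, $U^{(i)}$ is a child of $U$ and so has (P) by hypothesis, while $D_{S_{i+1}} U$ has (P) by the inductive hypothesis; glueability applied to the displayed short exact sequence then gives (P) for $D_{S_i} U$. Iterating down to $i = 0$, I conclude that $D_S U$ satisfies (P).

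Finally, since $S$ was chosen to be a maximal nil subset for $U$ and (P) is maximal nil dominant, the fact that $D_S U$ has (P) forces $U$ itself to have (P), which is the desired conclusion. I do not anticipate any real obstacle: the argument is essentially a bookkeeping exercise that applies the two axioms on (P) in sequence once the filtration of Lemma \ref{filtration} is in hand, and the whole point of defining $\mathcal{T}(V)$ via maximal nil subsets is precisely to ensure that every $U^{(i)}$ appearing in the filtration is a genuine child of $U$, so that the hypothesis on children of $U$ can be invoked at every step of the glueing induction.
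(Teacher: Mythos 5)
Your proof is correct and follows essentially the same route as the paper: fix the maximal nil subset $S$ for $U$, use the sequence of surjections $D_SU \to D_{S_1}U \to \cdots \to DU$ from Lemma \ref{filtration} whose kernels are children of $U$ (nonzero by Lemma \ref{jump}), apply glueability successively starting from $DU$ to get (P) for $D_SU$, and finish with maximal nil dominance. The only difference is cosmetic: you phrase the glueing step as a downward induction, while the paper states it as a successive application along the sequence.
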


\begin{proof}
Let $S$ be a maximal nil subset. We have the sequence of natural surjective homomorphisms
\begin{equation*}
D_{S} V = D_{S_0} V \to D_{S_1} V \to \cdots \to D_{S_{d-|S|}} V = DV
\end{equation*}
and every kernel of a homomorphism in this sequence has (P). Since $DV$ has (P) and (P) is glueable, it follows successively that $D_{S_{d-|S|-1}} V, \ldots, D_{S_1} V$ and $D_S V$ have (P). But (P) is maximal nil dominant, so $V$ has (P) as well.
\end{proof}

Let $U$ and $U'$ be $\C$-modules in the tree $\mathcal{T}(V)$. We say that $U'$ is a \emph{descendent} of $U$ if there is a directed path in $\mathcal{T}(V)$ from $U$ to $U'$.

We shall use the following simple observation:

\begin{lemma} \label{tree finite}
Let $V$ be a finitely generated $\C$-module. Then $\mathcal{T}(V)$ is a finite tree if and only if every chain in $\mathcal{T}(V)$ has finite length.
\end{lemma}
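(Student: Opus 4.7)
The plan is to recognize this as an instance of K\"onig's infinity lemma: the tree $\mathcal{T}(V)$ is, by construction, finitely branching (each node has at most $d$ children, since its children are indexed by a strictly increasing chain $S_0\subsetneq S_1\subsetneq\cdots\subsetneq[d]$ starting from a maximal nil subset, giving at most $d-|S|\leqslant d$ children). Hence finiteness and absence of infinite chains are equivalent.

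The forward direction is immediate: if $\mathcal{T}(V)$ has only finitely many nodes, any chain is a sequence of distinct nodes and so must terminate. For the converse, I would argue by contrapositive. Suppose $\mathcal{T}(V)$ is infinite. I want to build an infinite chain $V=U_0,U_1,U_2,\ldots$ in $\mathcal{T}(V)$ such that each $U_i$ has infinitely many descendants in $\mathcal{T}(V)$. Start with $U_0=V$, which has infinitely many descendants by assumption. Inductively, given $U_i$ with infinitely many descendants, observe that the descendants of $U_i$ (other than $U_i$ itself) partition into the finitely many subtrees rooted at the children of $U_i$; since there are only finitely many such children (at most $d$) and infinitely many descendants, at least one child $U_{i+1}$ must itself have infinitely many descendants. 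This recursion produces an infinite directed path in $\mathcal{T}(V)$, contradicting the hypothesis that every chain has finite length.

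There is no real obstacle: the only point worth verifying is the uniform bound on branching, which is visible directly from the definition of $\mathcal{T}(V)$ (children correspond to nonzero kernels in a chain of length at most $d-|S|$). One small matter of care is to note that each $U\in\mathcal{T}(V)$ is itself a finitely generated $\C$-module (being a subquotient of a finite direct sum of copies of a finitely generated module, and $\C$ is locally Noetherian), so the construction of children and the branching bound apply at every node, not merely at the root. With that in hand, the K\"onig-style argument above closes the proof in a couple of lines.
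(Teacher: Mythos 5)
Your proof is correct and follows essentially the same route as the paper: the forward direction is immediate, and for the converse both arguments use the König-style recursion, at each stage selecting among the at most $d$ children one with infinitely many descendants to produce an infinite chain.
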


\begin{proof}
One direction is clear. We show the other direction. Suppose that $\mathcal{T}(V)$ is an infinite tree. We construct an infinite chain in the following way. Let $V$ be the first element in the chain. Since $V$ has finitely many children and infinitely many descendants, there must be a child $V'$ of $V$ with infinitely many descendants. Let $V'$ be the second element in the chain, and repeat the procedure. Recursively, we get an infinite chain.
\end{proof}

Now we show that $\mathcal{T}(V)$ is a finite tree.

\begin{lemma} \label{T(V) is finite}
Let $V$ be a finitely generated $\C$-module. Then $\mathcal{T}(V)$ is a finite tree.
\end{lemma}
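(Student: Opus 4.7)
The plan is to combine Lemma~\ref{tree finite} with the locally Noetherian hypothesis to get a quick proof by contradiction. First, by Lemma~\ref{tree finite}, it is enough to show that every chain in $\mathcal{T}(V)$ has finite length, so I would suppose for contradiction that there is an infinite chain
\begin{equation*}
V = V_0, \, V_1, \, V_2, \, \ldots
\end{equation*}
in $\mathcal{T}(V)$, where each $V_{i+1}$ is a child of $V_i$.

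Next I would exploit the fact that each arrow in the tree is a quotient map with \emph{nonzero} kernel: by the construction of children, $V_{i+1}$ is obtained from $V_i$ as a quotient $V_i / W_i$ for some nonzero submodule $W_i \subseteq V_i$, where $W_i \cong K_{S_{j+1}}V_i / K_{S_j}V_i$ for a maximal nil subset $S$ of $V_i$ and a step in the chain $S = S_0 \subset S_1 \subset \cdots$; the nonvanishing of $W_i$ is precisely the content of Lemma~\ref{jump}. Composing the successive quotient maps gives surjections $\pi_i: V \twoheadrightarrow V_i$ with kernels
\begin{equation*}
K_i := \ker(\pi_i),
\end{equation*}
and the relation $V_{i+1} = V_i / W_i$ with $W_i \neq 0$ translates into a strictly increasing chain
\begin{equation*}
0 = K_0 \subsetneq K_1 \subsetneq K_2 \subsetneq \cdots
\end{equation*}
of submodules of $V$.

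Finally, since $\C$ is assumed to be locally Noetherian over $k$ and $V$ is finitely generated, $V$ is a Noetherian object in $\CMod$; hence the chain $K_0 \subsetneq K_1 \subsetneq \cdots$ cannot be strictly increasing forever. This contradiction shows that every chain in $\mathcal{T}(V)$ has finite length, and Lemma~\ref{tree finite} then gives that $\mathcal{T}(V)$ is finite.

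I do not expect any real obstacle here: the construction of $\mathcal{T}(V)$ was already engineered, via the notion of a maximal nil subset and Lemma~\ref{jump}, so that all quotient maps between consecutive nodes of a chain have nonzero kernel. The only subtle point to be careful about is to record this nonvanishing at each step (using the maximality of the chosen nil subset at each node $V_i$), so that pulling the kernels back to $V$ yields a genuinely strictly ascending chain rather than merely an ascending one.
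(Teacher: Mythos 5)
Your proposal is correct and is essentially the paper's own argument: reduce via Lemma~\ref{tree finite} to showing chains have finite length, pull the (nonzero, by Lemma~\ref{jump}) kernels back along the composed quotient maps to get a strictly increasing chain of submodules, and contradict Noetherianity of a finitely generated module over the locally Noetherian category $\C$. The only cosmetic difference is that the paper runs this argument for a chain starting at an arbitrary node $U_0$ (which is Noetherian as a quotient of $V$), whereas you anchor the chain at $V$ itself; this is harmless since any infinite chain extends back to the root.
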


\begin{proof}
By Lemma \ref{tree finite}, we have to show that every chain in $\mathcal{T}(V)$ has finite length. To this end, suppose on the contrary that there is an infinite chain
\begin{equation*}
U_0 \to U_1 \to U_2 \to \cdots.
\end{equation*}
For each integer $n\geqslant 1$, let $Z_n \subset U_0$ be the kernel of the composition of the quotient maps $U_0 \to U_1 \to \cdots \to U_n$. Since each quotient map $U_{n-1} \to U_n$ has a nonzero kernel, we have a strictly increasing chain $Z_1 \subset Z_2 \subset \cdots$ in $U_0$. But $U_0$ is a finitely generated $\C$-module since it is a quotient module of $V$. Since $\C$ is supposed to be locally Noetherian, we know that $U_0$ is Noetherian, so it cannot have an infinite strictly increasing chain of submodules.
\end{proof}

\begin{remark} \normalfont \label{justification}
We had defined maximal nil subsets for $V$ and used a maximal nil subset to construct a sequence of quotient maps starting from $D_SV$ and ending at $DV$. This construction seems artificial and unnatural compared to the analogue construction starting from $D_{\emptyset} V = \Sigma V$. The above lemma justifies our choice. Indeed, if the sequence of quotient maps starts from $D_{\emptyset} V = \Sigma V$, then it is very possible that $V$ appears as the kernel of a certain quotient map. That is, $V$ becomes a child of itself. Repeating this construction, the reader can see that $V$ appears repeatedly, so the tree has an infinite chain each member of which is $V$. In particular, the tree cannot be finite. Also note that the locally Noetherian property of $\C$ over $k$ is crucial for proving the finiteness of $\mathcal{T}(V)$.
\end{remark}

\subsection{Proof of Theorem \ref{main theorem I}}

We are now ready to prove Theorem \ref{main theorem I}.

\begin{theorem}
Let $\C$ be a locally finite, locally bounded graded category equipped with an inductive functor of multiplicity $d$, and let $k$ be a commutative Noetherian ring. Suppose that $\C$ is locally Noetherian over $k$. Let (P) be a property of certain $\C$-modules which is satisfied by the zero module. Then every finitely generated $\C$-module has (P) if and only if (P) is maximal nil dominant and glueable.
\end{theorem}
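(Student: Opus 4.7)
The forward direction is a formality: when every finitely generated $\C$-module has (P), both conditions hold because Lemma \ref{basic lemma}(3) keeps $D_S V$ and the middle term of a short exact sequence of finitely generated modules inside the class of finitely generated $\C$-modules, so the relevant conclusions are automatic. I therefore concentrate on the reverse implication.

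The plan is a double induction. The outer induction is strong induction on the generating degree $\gd(V)$ of a finitely generated $\C$-module $V$. The base case is $\gd(V) = -1$, which forces $V = 0$ and is covered by the standing hypothesis that (P) holds for the zero module. For the inductive step, assume $\gd(V) = n \geqslant 0$ and that every finitely generated $\C$-module of generating degree at most $n-1$ has (P); my goal is then to derive (P) for $V$ itself by running a secondary induction on the finite tree $\mathcal{T}(V)$ guaranteed by Lemma \ref{T(V) is finite}, working from its leaves up to the root. For every node $U \in \mathcal{T}(V)$, $U$ is a quotient of $V$, so $U$ is finitely generated with $\gd(U) \leqslant n$; by the defining property of an inductive functor $\gd(DU) = \gd(U) - 1 \leqslant n-1$, and $DU$ is finitely generated by Lemma \ref{basic lemma}(3), so the outer induction hypothesis yields that $DU$ has (P). By the secondary induction hypothesis, every child of $U$ in $\mathcal{T}(V)$ also has (P), with this statement being vacuous when $U$ is a leaf. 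Lemma \ref{lifting in tree} then propagates (P) from the children and from $DU$ to $U$. Specializing to the root $U = V$ concludes the step.

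The main obstacle is conceptual rather than computational: arranging the two nested inductions so that they cooperate. The strict drop $\gd(DU) = \gd(U) - 1$ comes from the inductive functor axioms and ensures the outer induction terminates; the finiteness of $\mathcal{T}(V)$, which rests on the locally Noetherian hypothesis on $\C$, ensures the inner induction terminates. Once these are in place, Lemma \ref{lifting in tree} does all the gluing, and the proof is reduced to a careful bookkeeping of which module inherits (P) from which.
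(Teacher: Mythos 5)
Your proposal is correct and takes essentially the same route as the paper's own proof: an outer induction on $\gd(V)$ with base case $V=0$, combined with an inner pass over the finite tree $\mathcal{T}(V)$ (Lemma \ref{T(V) is finite}) from the leaves upward, using $\gd(DU)=\gd(U)-1$ to apply the outer hypothesis to $DU$ and Lemma \ref{lifting in tree} to lift (P) to each node. The paper leaves the trivial forward direction implicit, which you dispatch correctly as well.
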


\begin{proof}
We want to prove that every finitely generated $\C$-module $V$ has (P). We use induction on $\gd(V)$. The base case $\gd(V)=-1$ ($V = 0$) is already implied by the given condition. Suppose that $\gd(V)\geqslant 0$.

We now show that $V$ has (P). Suppose $U$ is any $\C$-module in $\mathcal{T}(V)$. Then $U$ is a quotient module of $V$, so it is finitely generated and $\gd(U)\leqslant \gd(V)$. By the properties of inductive functor, we have $\gd(DU) < \gd(U) \leqslant \gd(V)$. Hence, by induction hypothesis, we know that $DU$ has property (P).

Let $N$ be the number of layers in $\mathcal{T}(V)$ which is finite by Lemma \ref{T(V) is finite}, where $V$ is at the first layer, the children of $V$ are at the second layer, and so on. Since the $\C$-modules at the $N$-th layer have no child, we deduce from Lemma \ref{lifting in tree} that they have property (P). If a $\C$-module is at the $(N-1)$-th layer, then its children (if any) are at the $N$-th layer. Applying Lemma \ref{lifting in tree} again, we deduce that the $\C$-modules at the $(N-1)$-th layer have property (P). Recursively, we deduce that every $\C$-module in $\mathcal{T}(V)$, including $V$, has property (P).
\end{proof}

\begin{remark} \normalfont
A careful reader can observe that we do not exhaust the full power of inductive functors to prove Theorem \ref{main theorem I}. Indeed, the assumption that $M(x)^{\oplus d} \to FM(x)$ are injective for all $x \in \Ob \C$ was not used, and the assumption that $\gd(DV) = \gd(V) - 1$ for nonzero $V$ is unnecessarily strong.
There are two reasons why we impose these strong requirements on the inductive functors. Firstly, these conditions are satisfied by many interesting examples such as shift functors of $\FI$ and $\OI$ (see \cite{GL2}); secondly, without these strong requirements, although we can still prove Theorem \ref{main theorem I} (which means that we can find more inductive functors), the behavior of inductive functors is less nice, so properties (P) which are nil maximal dominant become sparse accordingly, and hence the applications in practice are more restricted. In conclusion, we have to carefully keep a subtle balance between theoretic formalism and real applications. The reader will see this point of view in next section.
\end{remark}

\section{Homological degrees and Castelnuovo-Mumford regularity}

The conclusion of Theorem \ref{main theorem I} enable us to show that every finitely generated $\C$-module satisfies a property (P) formally via checking (P) is maximal nil dominant and glueable. As it is routine to verify the glueable condition, the challenge is to show that (P) is maximal nil dominant. A basic idea is to use the short exact sequence $0 \to V^S \to FV \to D_SV \to 0$ and the naturality of the map $V^S \to FV$. In this section we use this idea to provide an important application of this formal procedure, providing a sufficient criterion for the finiteness of regularity of finitely generated $\C$-modules.

Let us briefly recall the definitions of \emph{homology groups}, \emph{homological degrees}, and \emph{Castelnuovo-Mumford regularity}. The \emph{category algebra} $k\C$ of $\C$ was define in \cite{Webb}. It is a free $k$-module spanned by all morphisms in $\C$, whose product is given by the following rule: for two morphisms $\alpha$ and $\beta$ in $\C$
\begin{equation*}
\alpha \cdot \beta =
\begin{cases}
\alpha \circ \beta & \text{ if $\alpha$ and $\beta$ can be composed},\\
0 & \text{ otherwise}.
\end{cases}
\end{equation*}
This is an associative algebra without identity. It has a two-sided ideal $\mathfrak{m}$ spanned by
\begin{equation*}
\bigsqcup_{x < y} \C(x, y).
\end{equation*}
Note that
\begin{equation*}
k\C/ \mathfrak{m} = \bigoplus_{x \in \Ob \C} k\C(x, x),
\end{equation*}
both as a two-sided $\C$-module and as a $k$-algebra.

Given a finitely generated $\C$-module $V$ and $i \in \N$, the $i$-th homology group of $V$ is defined to be
\begin{equation*}
H_i(V) = \Tor^i_{k\C} (\mathfrak{m}, V)
\end{equation*}
which is still a finitely generated $\C$-module since $\C$ is locally Noetherian over $k$. The $i$-th homological degree is
\begin{equation*}
\hd_i(V) = \sup \{ |x| \mid (H_i(V))_x \neq 0 \},
\end{equation*}
which is a finite number by (1) of Lemma \ref{basic lemma}. The Castelnuovo-Mumford regularity (regularity for short) is
\begin{equation*}
\reg(V) = \sup \{ \hd_i(V) - i \mid i \in \N \}.
\end{equation*}
Note that $\reg(V) \geqslant \gd(V)$.

We introduce the following notion as a basic technical tool to study the above homological invariants.

\begin{definition} \label{adaptable projective resolution}
A projective resolution $\ldots \to P^2 \to P^1 \to P^0 \to V \to 0$ is \emph{adaptable} if the following two conditions are satisfied:
\begin{itemize}
\item each $P^i$ is a finite direct sum of free modules;
\item $\gd(P^i) = \gd(Z^i)$ for $i \geqslant 0$, where $Z^0=V$, $Z^1$ is the kernel of $P^0 \to V$, and $Z^i$ is the kernel of $P^{i-1} \to P^{i-2}$ for $i \geqslant 2$.
\end{itemize}
\end{definition}

Let $P^{\bullet} \to V \to 0$ be a fixed adaptable projective resolution (the reader can check the existence of such resolutions), and $S \subseteq [d]$ be a nil subset for $V$. Applying the inductive functor $F$ and $D_S$, we obtain two exact complexes $FP^{\bullet} \to FV \to 0$ and $D_SP^{\bullet} \to D_SV \to 0$ by (6) of Lemma \ref{basic lemma}. Note that these two complexes in general are not projective resolutions. However, we can still deduce useful information from them.

Firstly we consider free modules. Note that every subset $S \subseteq [d]$ is a nil subset for a free module.

\begin{lemma}
Let $S$ be a subset of $[d]$. One has $\reg(D_SM(x)) \leqslant \reg(FM(x))$ for all $x \in \Ob \C$.
\end{lemma}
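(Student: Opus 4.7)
The plan is to exploit the fact that because $\theta_{M(x)} : M(x)^{\oplus d} \to FM(x)$ is injective (part of the definition of an inductive functor), every subset $S \subseteq [d]$ is nil for the free module $M(x)$. Consequently we get a genuine short exact sequence
\begin{equation*}
0 \to M(x)^S \to FM(x) \to D_SM(x) \to 0,
\end{equation*}
and the strategy is simply to run the long exact sequence in homology and read off the degree bounds term by term.

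Here is how I would carry it out. First, observe that $M(x)^S$ is projective (a finite direct sum of copies of $M(x)$), so $H_i(M(x)^S) = 0$ for all $i \geqslant 1$, while $H_0(M(x)^S)$ is concentrated in degree $|x|$. The long exact sequence therefore splits into three regimes. For $i \geqslant 2$ it gives an isomorphism $H_i(D_SM(x)) \cong H_i(FM(x))$, so $\hd_i(D_SM(x)) - i = \hd_i(FM(x)) - i \leqslant \reg(FM(x))$. For $i = 0$, $H_0(D_SM(x))$ is a quotient of $H_0(FM(x))$, so $\hd_0(D_SM(x)) \leqslant \hd_0(FM(x)) \leqslant \reg(FM(x))$.

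The only interesting case is $i = 1$, where the segment
\begin{equation*}
0 \to H_1(FM(x)) \to H_1(D_SM(x)) \to H_0(M(x)^S) \to H_0(FM(x)) \to H_0(D_SM(x)) \to 0
\end{equation*}
forces $\hd_1(D_SM(x)) \leqslant \max\bigl(\hd_1(FM(x)),\, |x|\bigr)$. To finish, I need $|x| - 1 \leqslant \reg(FM(x))$. This is immediate once I note that the injection $M(x)^S \hookrightarrow FM(x)$ (take $S = \{s\}$ a singleton, so that $M(x)$ embeds in $FM(x)$) forces $\gd(FM(x)) \geqslant |x|$, and hence $\reg(FM(x)) \geqslant \gd(FM(x)) \geqslant |x| \geqslant |x| - 1$. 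Combined with the bound $\hd_1(FM(x)) - 1 \leqslant \reg(FM(x))$ from the definition of regularity, we conclude $\hd_1(D_SM(x)) - 1 \leqslant \reg(FM(x))$.

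No step is really an obstacle here; the proof is an entirely mechanical long-exact-sequence chase, and the only subtlety is making sure the $i = 1$ case does not lose us a unit, which is handled by the cheap inequality $|x| \leqslant \gd(FM(x))$. The lemma is essentially the basic input that lets later results bound $\reg(D_SV)$ in terms of $\reg(FV)$ for arbitrary $V$, after which the tree-of-descendants machinery from Section 3 can propagate finiteness of regularity.
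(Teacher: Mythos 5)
Your overall strategy is the same as the paper's: use injectivity of $\theta_{M(x)}$ to get the short exact sequence $0 \to M(x)^S \to FM(x) \to D_SM(x) \to 0$, note that $M(x)^S$ is projective, and chase the long exact sequence in homology, with only the case $i=1$ requiring an extra degree estimate. However, your handling of that one nontrivial case contains a genuine gap. You claim that the embedding $M(x) \hookrightarrow FM(x)$ ``forces $\gd(FM(x)) \geqslant |x|$.'' This is not a valid inference: the generating degree of an ambient module is not bounded below by the generating degree (or the support) of a submodule, and the claim is in fact false in the general setting of the lemma. For instance, for the inductive functor $\boldsymbol{\Sigma}$ on $\CN^d$ constructed in Section 5.1, one has $\boldsymbol{\Sigma} M(\bfn) \cong \bigoplus_{n_i \geqslant 1} M(\bfn - \mathbf{1}_i)$ when all $n_i \geqslant 1$, so $\gd(\boldsymbol{\Sigma} M(\bfn)) = |\bfn| - 1 < |\bfn|$ even though $\theta_{M(\bfn)}$ is injective. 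So the ``cheap inequality'' $|x| \leqslant \gd(FM(x))$ that you rely on cannot be used.

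Fortunately the gap is small and easily repaired, because what you actually need is only the weaker bound $|x| - 1 \leqslant \reg(FM(x))$. This follows from Lemma \ref{basic lemma}(2) (with $S = \emptyset$, so $D_\emptyset M(x) = FM(x)$): the axiom $\gd(DV) = \gd(V) - 1$ yields $\gd(M(x)) \leqslant \gd(FM(x)) + 1$, hence
\begin{equation*}
|x| - 1 = \gd(M(x)) - 1 \leqslant \gd(FM(x)) \leqslant \reg(FM(x)),
\end{equation*}
which is exactly the estimate the paper uses at this point: $\hd_1(D_SM(x)) \leqslant \max\{\hd_1(FM(x)),\, \gd(M(x))\} \leqslant \max\{\hd_1(FM(x)),\, \gd(FM(x)) + 1\} \leqslant \reg(FM(x)) + 1$. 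With your false auxiliary claim replaced by this inequality, the rest of your argument (the isomorphisms for $i \geqslant 2$ and the quotient bound for $i = 0$) is correct and coincides with the paper's proof.
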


\begin{proof}
For $i = 0$, one has $\hd_0(FM(x)) \geqslant \hd_0(D_SM(x))$ since $D_SM(x)$ is a quotient module of $FM(x)$. For $i \geqslant 2$ the short exact sequence $0 \to M(x)^S \to FM(x) \to D_SM(x) \to 0$ and the long exact sequence
\begin{equation*}
\ldots \to H_1(M(x)) = 0 \to H_1 (FM(x)) \to H_1(D_SM(x)) \to H_0(M(x)) \to H_0(FM(x)) \to H_0(D_SM(x)) \to 0
\end{equation*}
associated with it tell us $\hd_i(FM(x)) \cong \hd_i(D_SM(x))$. Therefore, it suffices to show that
\begin{equation*}
\hd_1(D_SM(x)) \leqslant \reg(FM(x)) + 1.
\end{equation*}
This is true because
\begin{equation*}
\hd_1(D_S M(x)) \leqslant \max \{ \hd_1 (FM(x)), \, \gd(M(x)) \} \leqslant \max \{ \hd_1 (FM(x)), \, \gd(FM(x)) + 1 \} \leqslant \reg(FM(x)) + 1.
\end{equation*}
\end{proof}

\begin{proposition} \label{bounding hd} \normalfont
Let $V$ be a finitely generated $\C$-module and let $S \subseteq [d]$ be a nil subset for $V$. Suppose that $\reg(FM(x)) \leqslant |x|$ for all $x \in \Ob \C$. Then:
\begin{enumerate}
\item For all $i \in \N$, $\hd_i (V) \leqslant \reg(D_SV) + i + 1$, and in particular $\reg(V) \leqslant \reg(D_SV) + 1$.
\item For all $i \in \N$, $\hd_i(D_SV) \leqslant \max \{ \hd_j(V) + i -j \mid 0 \leqslant j \leqslant i \}$, and in particular $\reg(D_SV) \leqslant \reg(V)$.
\end{enumerate}
\end{proposition}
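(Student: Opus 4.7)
The plan is to work through an adaptable projective resolution $P^\bullet \to V \to 0$. Because $K_S$ vanishes on $V$ (by hypothesis), on each $P^i$, and on each syzygy $Z^i\subset P^{i-1}$ (by Lemma~\ref{basic lemma}(5)), Lemma~\ref{basic lemma}(4) implies that both $FP^\bullet \to FV \to 0$ and $D_SP^\bullet \to D_SV \to 0$ are exact complexes, even though their terms are not projective in general.

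The first step I would take is an auxiliary recursive bound on generating degrees of syzygies. Applying the tail of the long exact sequence of $H_\bullet$ to $0 \to Z^{p+1} \to P^p \to Z^p \to 0$, together with the dimension shift $H_1(Z^p) \cong H_{p+1}(V)$, the identity $\gd(M) = \gd(H_0(M))$, and the adaptable equality $\gd(P^p) = \gd(Z^p)$, yields the recursion $\gd(Z^{p+1}) \leqslant \max\{\hd_{p+1}(V),\, \gd(Z^p)\}$. Iterating from $\gd(Z^0) = \hd_0(V)$ gives $\gd(Z^p) \leqslant \max_{j\leqslant p}\hd_j(V)$, which will drive both parts.

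For part (2), the Cartan--Eilenberg spectral sequence associated to the (non-projective) resolution $D_SP^\bullet \to D_SV \to 0$ yields $\hd_n(D_SV) \leqslant \max_{p+q=n}\hd_q(D_SP^p)$. Since $D_SP^p = \bigoplus_l D_SM(x_l^p)$ with $|x_l^p| \leqslant \gd(P^p) = \gd(Z^p)$, and since the preceding lemma together with the standing hypothesis $\reg(FM(x)) \leqslant |x|$ give $\reg(D_SM(x)) \leqslant |x|$, one has $\hd_q(D_SP^p) \leqslant \gd(Z^p) + q$. Combining with the recursion and the inequality $n-p \leqslant n-j$ for $j \leqslant p$, each $(p,q)$ contribution is at most $\max_{j\leqslant n}(\hd_j(V) + n - j)$, establishing (2); the particular assertion $\reg(D_SV) \leqslant \reg(V)$ follows upon subtracting $n$ and taking the supremum.

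For part (1), the base case $i = 0$ is immediate from Lemma~\ref{basic lemma}(2): $\hd_0(V) = \gd(V) \leqslant \gd(D_SV) + 1 \leqslant \reg(D_SV) + 1$. For $i \geqslant 1$ I would induct on $i$, using the long exact sequence of $0 \to V^S \to FV \to D_SV \to 0$ to deduce $\hd_i(V) = \hd_i(V^S) \leqslant \max\{\hd_{i+1}(D_SV),\, \hd_i(FV)\}$. The first term is already at most $\reg(D_SV) + i + 1$. To control $\hd_i(FV)$, I would rerun the spectral sequence for $FP^\bullet \to FV \to 0$, using the sharper comparison $\hd_q(FM(x)) \leqslant \hd_q(D_SM(x))$ for $q \geqslant 1$ (which drops out of the LES of $0 \to M(x)^S \to FM(x) \to D_SM(x) \to 0$ since $H_q(M(x)^S) = 0$ for $q \geqslant 1$) together with the inductive hypothesis for $\hd_j(V)$, $j < i$, to force every contribution with $q \geqslant 1$ to be at most $\reg(D_SV) + i + 1$. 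The main obstacle will be the boundary term $p = i,\, q = 0$: its natural bound $\hd_0(FP^i) \leqslant \gd(Z^i)$ re-involves $\hd_i(V)$ via the recursion of the first step, creating apparent circularity. I expect to resolve this by examining the long exact sequence at top degree: a hypothetical violation of the target bound would produce a top-degree generator of $H_i(V^S)$ that could not be killed by the already bounded $H_{i+1}(D_SV)$, and would then contradict the remaining spectral-sequence bound on $H_i(FV)$ coming from the $q \geqslant 1$ strata.
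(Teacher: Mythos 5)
Your auxiliary recursion $\gd(Z^p)\leqslant \max_{j\leqslant p}\hd_j(V)$ and your part (2) are correct, and your part (2) is essentially the paper's argument repackaged (the paper runs an induction on $i$ using the single short exact sequence $0\to D_SZ^1\to D_SP^0\to D_SV\to 0$ with the hypothesis applied to $Z^1$; your hyperhomology spectral sequence over the whole exact complex $D_SP^\bullet\to D_SV\to 0$, combined with $\reg(D_SP^p)\leqslant \gd(Z^p)$, amounts to the same thing). The problem is part (1), exactly at the corner term you flagged, and your proposed escape does not work. Suppose $\hd_i(V)=m>\reg(D_SV)+i+1$. Your long exact sequence argument does force $H_i(FV)$ to be nonzero in degree $m$, but this contradicts nothing: in the spectral sequence for $FP^\bullet\to FV\to 0$ the piece $E^\infty_{i,0}$ of $H_i(FV)$ is a subquotient of $H_0(FP^i)$, whose degrees can reach $\gd(FP^i)\leqslant\gd(P^i)=\gd(Z^i)$, and in the hypothetical violation one has precisely $\gd(Z^i)=m$ (your recursion gives $\gd(Z^i)\leqslant\max\{m,\gd(Z^{i-1})\}$ and always $\hd_i(V)\leqslant\gd(Z^i)$). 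So a top-degree class of $H_i(FV)$ can perfectly well live in the $q=0$ stratum; the $q\geqslant 1$ bounds are silent about it, and the circularity is not resolved. A secondary issue: your identification $\hd_i(V)=\hd_i(V^S)$ requires $S\neq\emptyset$, whereas the proposition must hold for $S=\emptyset$ as well (this is exactly how Corollary \ref{a corollary} is obtained, with $D_\emptyset V=FV$), and your scheme gives nothing in that case.

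The missing idea is to apply $D_S$ to the syzygies themselves and exploit the defining property $\gd(DU)=\gd(U)-1$ of the inductive functor through Lemma \ref{basic lemma}(2), namely $\gd(U)\leqslant\gd(D_SU)+1$; this is the step your plan only uses at $i=0$, although the paper points out it is vital throughout. Concretely, one proves simultaneously by induction on $i$ that $\reg(D_SZ^i)\leqslant\reg(D_SV)+i$ and $\gd(Z^i)\leqslant\reg(D_SV)+i+1$: since $S$ is nil for every $Z^i$ (Lemma \ref{basic lemma}(5)), the sequences $0\to D_SZ^{i+1}\to D_SP^i\to D_SZ^i\to 0$ are exact, whence
\begin{equation*}
\reg(D_SZ^{i+1})\leqslant\max\{\reg(D_SP^i),\,\reg(D_SZ^i)+1\}\leqslant\max\{\gd(Z^i),\,\reg(D_SV)+i+1\}=\reg(D_SV)+i+1,
\end{equation*}
using $\reg(D_SP^i)\leqslant\reg(FP^i)\leqslant\gd(P^i)=\gd(Z^i)$, and then $\gd(Z^{i+1})\leqslant\gd(D_SZ^{i+1})+1\leqslant\reg(D_SV)+i+2$, which gives $\hd_{i+1}(V)\leqslant\gd(Z^{i+1})$ and hence part (1), uniformly in $S$ (including $S=\emptyset$). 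Note this route bounds $\gd(Z^i)$ through $\reg(D_SZ^i)$ rather than through $\hd_j(V)$, which is what breaks the circularity you encountered.
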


\begin{proof}
The conclusion holds clearly for $V= 0$, so we let $V$ be a nonzero module.

(1): As explained before, the adaptable projective resolution $P^{\bullet} \to V \to 0$ induces an exact complex $D_SP^{\bullet} \to DV \to 0$. By the previous lemma, for each term $P^j$ appearing in $P^{\bullet} \to V \to 0$, one has $\reg(D_SP^j) \leqslant \reg(FP^j)$.

Recall that $Z^0 = V$, $Z^1$ is the kernel of $P^0 \to V$, and $Z^i$ is the kernel of $P^{i-1} \to P^{i-2}$ for $i \geqslant 2$. We show that
\begin{align*}
\gd(Z^i) \leqslant \reg(D_SV) + i + 1;\\
\reg(D_SZ^i) \leqslant \reg(D_SV) + i.
\end{align*}
The conclusion of (1) follows from the first inequality since $\hd_i(V) \leqslant \gd(Z^i)$.

For $i = 0$, these inequalities hold clearly since
\begin{equation*}
\gd(V) \leqslant \gd(D_SV) + 1 \leqslant \reg(D_SV) + 1.
\end{equation*}
Suppose that they hold for all $j$ with $j \leqslant i$ and let $j = i + 1$. Consider the short exact sequences $0 \to Z^{i+1} \to P^i \to Z^i \to 0$ and $0 \to D_SZ^{i+1} \to D_SP^i \to D_SZ^i \to 0$ (the exactness follows from (4) of Lemma \ref{basic lemma}). We get
\begin{align*}
\reg(D_SZ^{i+1}) & \leqslant \max \{ \reg(D_SP^i), \, \reg(D_SZ^i) + 1 \}\\
& \leqslant \max \{ \reg(FP^i), \, \reg(D_SV) + i + 1 \} & \text{the previous Lemma and induction hypothesis} \\
& \leqslant \max \{ \gd(P^i), \, \reg(D_SV) + i + 1 \} & \text{the given assumption} \\
& \leqslant \max \{ \gd(Z^i), \, \reg(D_SV) + i + 1 \} & \text{since $\gd(P^i) = \gd(Z^i)$}\\
& \leqslant \max \{ \reg(D_SV) + i + 1, \, \reg(D_SV) + i + 1 \} & \text{by the induction hypothesis}\\
& = \reg(D_SV) + i+1.
\end{align*}
Therefore,
\begin{equation*}
\gd(Z^{i+1}) \leqslant \gd(D_SZ^{i+1}) + 1 \leqslant \reg(D_SZ^{i+1}) + 1 \leqslant \reg(D_SV) + i + 2.
\end{equation*}
The conclusion follows by induction.

(2): The inequality holds for $i = 0$. Suppose that it holds for $j$ with $j \leqslant i$, and consider the $(i+1)$-th homological degrees. By the short exact sequence $0 \to D_SZ^1 \to D_SP^0 \to D_SV \to 0$ we have:
\begin{align*}
\hd_{i+1}(D_SV)  & \leqslant \max \{ \hd_i(D_SZ^1), \, \hd_{i+1}(D_SP^0) \} \\
& \leqslant \max \{\hd_i(D_SZ^1), \, \gd(D_SP^0) + i + 1 \} & \text{ since $\reg(D_SP^0) \leqslant \gd(P^0)$}\\
& \leqslant \max \{ \{ \hd_j(Z^1) + i -j \mid 0 \leqslant j \leqslant i \} \cup \{ \gd(P^0) + i + 1 \} \} & \text{ the induction hypothesis on $Z^1$}
\end{align*}
Note that $\gd(P^0) = \gd(V)$, $\hd_j(Z^1) = \hd_{j+1}(V)$ for $j \geqslant 1$, and
\begin{equation*}
\gd(Z^1) \leqslant \max \{\gd(P^0), \, \hd_1(V) \}.
\end{equation*}
Putting them together, we obtain
\begin{align*}
\hd_{i+1}(D_SV)  & \leqslant \max \{ \gd(V) + i, \, \hd_1(V) + i, \, \hd_2(V) + i - 1, \, \ldots, \, \hd_{i+1}(V), \,  \gd(V) + i + 1\} \\
& = \max \{ \hd_j(V) + i + 1 -j \mid 0 \leqslant j \leqslant i + 1 \}.
\end{align*}
Therefore, the inequality holds for $i+1$.
\end{proof}

\begin{remark} \normalfont
The condition that $\gd(DV) = \gd(V) - 1$ imposed on inductive functors, which is unnecessarily strong for proving Theorem \ref{main theorem I}, plays a vital role in the proof of this proposition.
\end{remark}

An straightforward corollary is:

\begin{corollary} \label{a corollary}
Let $V$ be a finitely generated $\C$-module. Then
\begin{equation*}
\reg(FV) \leqslant \reg(V) \leqslant \reg(FV) + 1.
\end{equation*}
\end{corollary}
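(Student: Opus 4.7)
The plan is simply to specialize Proposition \ref{bounding hd} to the nil subset $S = \emptyset$. First I would record the two elementary observations that make this specialization work. The empty set is always a nil subset for every $\C$-module $V$, because $K_\emptyset V = KV \cap V^\emptyset = 0$ by definition. Moreover, when $S = \emptyset$ the natural map $\bigoplus_{s \in S} V^s \to FV$ has domain $0$, so its cokernel $D_\emptyset V$ is literally $FV$ itself; this was also noted right after the definitions of $D_SV$ and $K_SV$.

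With these two identifications, both inequalities fall out of the proposition without further work. Applying part (2) with $S = \emptyset$ gives $\reg(FV) = \reg(D_\emptyset V) \leqslant \reg(V)$, and applying part (1) gives $\reg(V) \leqslant \reg(D_\emptyset V) + 1 = \reg(FV) + 1$. There is no real obstacle; the only subtlety worth flagging is that the standing hypothesis of Proposition \ref{bounding hd}, namely $\reg(FM(x)) \leqslant |x|$ for all $x \in \Ob \C$, is inherited implicitly by the corollary, so the result should be read under that assumption (which is precisely the setting relevant to Theorem \ref{main theorem II}).
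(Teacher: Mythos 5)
Your proof is correct and is exactly the paper's argument: the paper also proves the corollary by taking $S = \emptyset$ in Proposition \ref{bounding hd}, using that the empty set is always a nil subset and that $D_\emptyset V = FV$. Your remark that the hypothesis $\reg(FM(x)) \leqslant |x|$ is tacitly carried over from the proposition is an accurate reading of the paper's standing assumptions in that section.
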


\begin{proof}
Let $S$ be the empty set in the conclusion of Proposition \ref{bounding hd}.
\end{proof}

Now we are ready to prove Theorem \ref{main theorem II}.

\begin{theorem}
Let $\C$ be a locally finite, locally bounded graded category equipped with an inductive functor $F$ of multiplicity $d$, and let $k$ be a commutative Noetherian ring. Suppose that $\C$ is locally Noetherian over $k$. If for every $x \in \Ob \C$, $\reg(FM(x)) \leqslant |x|$, then $\reg(V) < \infty$ for every finitely generated $\C$-module $V$.
\end{theorem}

\begin{proof}
Clearly, the property of having finite regularity is glueable. Statement (1) of Proposition \ref{bounding hd} asserts that it is also max nil dominant since we can choose $S$ in that proposition to be a maximal nil subset. Now apply the conclusion of Theorem \ref{main theorem I}.
\end{proof}

Actually, we can see from Statement (1) of Proposition \ref{bounding hd} that the property of having finite regularity is not only maximal nil dominant, but nil dominant. That is, the nil subset $S \subseteq [d]$ need not be maximal. Actually, the only reason that we choose $S$ to be a maximal nil subset has been explained in Remark 3.9: maximal nil subsets guarantees the finiteness of the tree $\mathcal{T}(V)$ so that we can carry out the inductive machinery.

\section{Applications}

In this section we apply the theoretic results developed in previous sections to those concrete combinatorial categories in Table 1, except $\VI_q$. We briefly describe their definitions, construct an inductive functor for each of them, and verify that their inductive functors satisfy all conditions required for Theorems \ref{main theorem I} and \ref{main theorem II} (actually, these inductive functors we constructed satisfy extra nice properties such as preserving projective modules). Furthermore, we will construct minimal linear relative projective resolutions for truncated modules of these categories, proving Theorem \ref{main theorem IV}.

\subsection{The category $\CN^d$}

Recall that objects of $\C = \CN^d$ are $d$-tuples $\bfn = (n_1, \ldots, n_d)$ with $n_i \in \N$, and there is a unique morphism from $\mathbf{m} \to \bfn$ if $m_i \leqslant n_i$ for each $i \in [d]$. A representation $V$ of $\CN^d$ gives rise to a $k$-module
\begin{equation*}
\bigoplus _{\bfn \in \Ob \C} V_{\bfn}
\end{equation*}
which can be viewed as a \emph{positively multigraded module} of the polynomial ring $A = k[X_1, \ldots, X_d]$.

There are $d$ faithful functors $\C \to \C$ defined as follows:
\begin{equation*}
\iota_i: \C \to \C, \quad (n_1, \ldots, n_{i-1}, n_i, n_{i+1}, \ldots, n_d) \mapsto (n_1, \ldots, n_{i-1}, n_i+1, n_{i+1}, \ldots, n_d)
\end{equation*}
which induce $d$ \emph{shift functors} $\Sigma_i: \CMod \to \CMod$ by sending a $\C$-module $V$ to $V \circ \iota_i$. Let
\begin{equation*}
\boldsymbol{\Sigma} = \Sigma_1 \oplus \ldots \oplus \Sigma_d.
\end{equation*}
We shall show that this is an inductive functor for $\CMod$ satisfying all the requirements in Theorems \ref{main theorem I} and \ref{main theorem II}.

\begin{lemma} \label{lemma for N^d}
The above functor $\Sigma$ is an inductive functor. Moreover, for $\bfn \in \Ob \C$, one has
\begin{equation*}
\boldsymbol{\Sigma} M(\bfn) \cong \Big{(} \bigoplus_{n_i \geqslant 1} M(\bfn - \mathbf{1}_i) \Big{)} \oplus \Big{(} \bigoplus_{n_i = 0} M(\bfn) \Big{)},
\end{equation*}
where $\mathbf{1}_i = (0, \ldots, 0, 1, 0, \ldots, 0)$ with $1$ at the $i$-th position. Moreover, for any nonzero $V \in \CMod$, one has $\gd(DV) = \gd(V) - 1$.
\end{lemma}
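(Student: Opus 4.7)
The plan is to verify the four axioms of an inductive functor in turn, extracting the explicit formula for $\boldsymbol{\Sigma} M(\bfn)$ along the way. Exactness of $\boldsymbol{\Sigma}$ reduces to exactness of each $\Sigma_i = - \circ \iota_i$, which is automatic since pullback along a functor preserves kernels and cokernels. For the natural transformation, I would define $\theta_i : V \to \Sigma_i V$ at object $\mathbf{m}$ to be the action of the unique morphism $\mathbf{m} \to \mathbf{m} + \mathbf{1}_i$ on $V_{\mathbf{m}}$, and then take $\theta = (\theta_1, \ldots, \theta_d) : \mathrm{Id}^{\oplus d} \to \boldsymbol{\Sigma}$.

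To identify $\Sigma_i M(\bfn)$ I would compute componentwise. Since $(\Sigma_i M(\bfn))_{\mathbf{m}} = k\C(\bfn, \mathbf{m} + \mathbf{1}_i)$ is $k$ precisely when $\bfn \leqslant \mathbf{m} + \mathbf{1}_i$, the case $n_i \geqslant 1$ gives the equivalent condition $\bfn - \mathbf{1}_i \leqslant \mathbf{m}$ and hence a natural isomorphism $\Sigma_i M(\bfn) \cong M(\bfn - \mathbf{1}_i)$, while the case $n_i = 0$ reduces to $\bfn \leqslant \mathbf{m}$ and produces $\Sigma_i M(\bfn) \cong M(\bfn)$; a quick check shows that under the latter identification $\theta_i$ becomes the identity map. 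This establishes the displayed formula for $\boldsymbol{\Sigma} M(\bfn)$ and shows that it is a finite direct sum of free modules, in particular finitely generated. For the injectivity of $\theta_{M(\bfn)}$, I would note that at any $\mathbf{m}$ with $\bfn \leqslant \mathbf{m}$ the induced map $k^d = (M(\bfn)^{\oplus d})_{\mathbf{m}} \to \bigoplus_i (\Sigma_i M(\bfn))_{\mathbf{m}} = k^d$ sends the $i$-th standard basis vector to the generator of the $i$-th summand and is thus the identity, while at $\mathbf{m}$ with $\bfn \not\leqslant \mathbf{m}$ the source already vanishes.

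It remains to prove that $\gd(DV) = \gd(V) - 1$ for every nonzero $V$. The lower bound $\gd(DV) \geqslant \gd(V) - 1$ is already contained in Lemma \ref{basic lemma}(2) applied with $S = [d]$. For the upper bound my plan is first to compute $DM(\bfn)$ and then transfer to an arbitrary $V$ via a free cover. Since the map $\theta_{M(\bfn)}$ is block-diagonal with components $\theta_i : M(\bfn) \to \Sigma_i M(\bfn)$, the cokernel splits as $DM(\bfn) \cong \bigoplus_i \mathrm{coker}(\theta_i)$; by the preceding paragraph the summands with $n_i = 0$ vanish (since $\theta_i$ is the identity there), while those with $n_i \geqslant 1$ are quotients of $M(\bfn - \mathbf{1}_i)$, so $DM(\mathbf{0}) = 0$ and $\gd(DM(\bfn)) \leqslant |\bfn| - 1$ whenever $\bfn \neq \mathbf{0}$. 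For arbitrary nonzero $V$, I would pick a surjection $P \to V \to 0$ with $P = \bigoplus_j M(\bfn_j)$ and $\gd(P) = \gd(V)$, and invoke right exactness of $D$ (which follows from exactness of $\mathrm{Id}^{\oplus d}$ and $\boldsymbol{\Sigma}$) to obtain a surjection $DP \to DV$; this gives $\gd(DV) \leqslant \gd(DP) \leqslant \gd(V) - 1$, completing the proof.

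The only nonformal ingredient in the argument is the collapse of the $n_i = 0$ summand of $DM(\bfn)$ to zero, which is what produces the drop in generating degree; this is where the specific geometry of the embeddings $\iota_i$ enters, and I expect it to be the sole substantive obstacle. Everything else reduces to routine manipulation of cokernels and the formal properties of exact functors.
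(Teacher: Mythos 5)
Most of your argument matches the paper's: the componentwise identification $\Sigma_i M(\bfn)\cong M(\bfn-\mathbf{1}_i)$ or $M(\bfn)$, the injectivity of $\theta_{M(\bfn)}$, the block-diagonal computation of $DM(\bfn)$, and the upper bound $\gd(DV)\leqslant\gd(V)-1$ via a free cover $P\to V$ with $\gd(P)=\gd(V)$ and right exactness of $D$ are all exactly how the paper proceeds. But there is a genuine gap in your treatment of the lower bound: you cite Lemma \ref{basic lemma}(2) with $S=[d]$ to get $\gd(DV)\geqslant\gd(V)-1$, and that is circular. All of Section 2 is stated under the standing hypothesis that $F$ is already an inductive functor, and the proof of Lemma \ref{basic lemma}(2) obtains the inequality $\gd(D_SV)\geqslant\gd(V)-1$ precisely by invoking the axiom $\gd(DV)=\gd(V)-1$ — the very condition you are in the middle of verifying for $\boldsymbol{\Sigma}$. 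So the lower bound cannot be quoted; it needs a direct argument specific to $\boldsymbol{\Sigma}$, and indeed this is the only part of the lemma with real content beyond bookkeeping (your upper-bound computation alone would be consistent with, say, $DV=0$ for some $V$ with $\gd(V)\geqslant 1$, which must be ruled out).

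The paper closes this gap as follows. Fix a nonzero $V$ with $\gd(V)>0$ and any integer $0\leqslant n<\gd(V)$; it suffices to show $\gd(DV)\geqslant n$. Let $V'$ be the quotient of $V$ by the submodule generated by all $V_{\bfn}$ with $|\bfn|\leqslant n$; since $n<\gd(V)$, $V'\neq 0$, and right exactness of $D$ gives a surjection $DV\to DV'$, so it is enough to show $\gd(DV')\geqslant n$. Because $V'$ vanishes in ranks $\leqslant n$, one has $(\boldsymbol{\Sigma}V')_{\bfn}=0$ and hence $(DV')_{\bfn}=0$ whenever $|\bfn|<n$, so it remains only to see $DV'\neq 0$. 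Take $\mathbf{m}$ minimal with $V'_{\mathbf{m}}\neq 0$; then $|\mathbf{m}|>n\geqslant 0$, so $m_i\geqslant 1$ for some $i$, and minimality gives $V'_{\mathbf{m}-\mathbf{1}_i}=0$ while $(\Sigma_i V')_{\mathbf{m}-\mathbf{1}_i}=V'_{\mathbf{m}}\neq 0$; since the source of $\theta_{V'}$ vanishes at $\mathbf{m}-\mathbf{1}_i$, it follows that $(DV')_{\mathbf{m}-\mathbf{1}_i}\neq 0$. This is exactly where the geometry of the embeddings $\iota_i$ enters the lower bound, and it is the step your proposal is missing.
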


\begin{proof}
Firstly, note that one has
\begin{equation*}
\Sigma_i M(\bfn) =
\begin{cases}
M(\bfn), & n_i = 0;\\
M(\bfn - \mathbf{1}_i), & n_i \geqslant 1.
\end{cases}
\end{equation*}
This observation implies the isomorphism.

Now we show that $\boldsymbol{\Sigma}$ is an inductive functor. Clearly, it is exact since all $\Sigma_i$ are exact. The naturality $\mathrm{Id}^d \to \boldsymbol{\Sigma}$ is clear. It is also clear that $\boldsymbol{\Sigma} M(\bfn)$ is finitely generated by the isomorphism in the lemma, and the natural map $M(\bfn) \to \boldsymbol{\Sigma} M(\bfn)$ is injective. One needs to check that $\gd(DV) = \gd(V) - 1$ for nonzero $\C$-modules $V$. For a free module $M(\bfn)$, the above isomorphism gives
\begin{equation*}
DM(\bfn) = \bigoplus_{n_i \geqslant 1} \Sigma_i M(\bfn) / M(\bfn).
\end{equation*}
It is not hard to see that $\gd(\Sigma_i M(\bfn) / M(\bfn)) = |\bfn| - 1$ since $\Sigma_i M(\bfn) / M(\bfn))$ is nonzero and generated by its value on the object $\bfn - \mathbf{1}_i$.

Now we consider an arbitrary finitely generated $\C$-module. The case $V=0$ is trivial, so suppose that $V\neq 0$. Let $P \to V$ be a surjective homomorphism where $P$ is a finite direct sum of free modules and $\gd(P)=\gd(V)$. Since $D$ is right exact, we have a surjective homomorphism $DP \to DV$, and hence
\begin{equation*}
\gd(DV) \leqslant \gd(DP) = \gd(P)-1 = \gd(V) - 1.
\end{equation*}
To prove that $\gd(DV) \geqslant \gd(V)-1$, the case $\gd(V)\leqslant 0$ is trivial, so suppose that $\gd(V)>0$. It suffices to prove that for any non-negative integer $n < \gd(V)$, one has $\gd(DV) \geqslant n$. To this end, let $V'$ be the quotient of $V$ by its $\C$-submodule generated by all the elements of $V_{\bfn}$ for every object $\bfn$ of $\C$ with $|\bfn| \leqslant n$. Since $D$ is right exact, we have a surjective homomorphism $DV \to DV'$, so $\gd(DV) \geqslant \gd(DV')$. We shall show that $\gd(DV') \geqslant n$.

If $\bfn$ is any object of $\C$ such that $|\bfn| < n$, then $(\boldsymbol{\Sigma} V')_{\bfn} = 0$ and so $(DV')_{\bfn} =0$. Therefore, we only need to see that $DV' \neq 0$. Since $n <\gd(V)$, we know that $V' \neq 0$. Let $\mathbf{m}$ be a minimal object of $\C$ such that $V'_{\mathbf{m}} \neq 0$. Then $|\mathbf{m}| > n \geqslant 0$. This implies that there is an $i \in [d]$ such that $V'_{\mathbf{m} - \mathbf{1}_i} = 0$ and $(\boldsymbol{\Sigma} V')_{\mathbf{m} - \mathbf{1}_i} \neq 0$, so $(DV')_{{\mathbf{m} - \mathbf{1}_i}} \neq 0$.

\end{proof}

\subsection{The category $\FI^d$}

The category $\FI^d$ is the product of $d$ copies of the category $\FI$; it was introduced by Gadish in \cite{Gad}. Its objects are $d$-tuples of finite sets $\bfn = ([n_1], [n_2], \ldots, [n_d])$, and its morphisms are $d$-tuples of injections $(f_1, f_2, \ldots, f_d)$. Using the rank function $\bfn \mapsto n_1 + n_2 + \ldots + n_d$, one can define the homological degrees of $\FI^d$-modules. The fact that any finitely generated $\FI^d$-module over a commutative Noetherian ring is Noetherian is implied by \cite[Proposition 2.2.4]{SS}, and recently Yu and the second author gave another proof of this fact in \cite[Section 3]{LY2}. Moreover, the category of $\FI^d$-modules has $d$ distinct shift functors $\Sigma_i$, $i \in [d]$. Define the functor $\boldsymbol{\Sigma}$ from the category of $\FI^d$-modules to itself by $\boldsymbol{\Sigma} V = \Sigma_1 V\oplus \cdots \oplus \Sigma_d V$ for every $\FI^d$-module $V$. One has (\cite[Lemma 2.3]{LY2}):
\begin{equation*}
\boldsymbol{\Sigma} M(\bfn) \cong M(\bfn)^{\oplus d} \oplus \left( \bigoplus_{n_i \geqslant 1} M(\bfn - \mathbf{1}_i)^{\oplus n_i} \right).
\end{equation*}
Then $\boldsymbol{\Sigma}$ is an inductive functor for $\FI^d$-modules. Moreover, it satisfies all requirements in Theorems \ref{main theorem I} and \ref{main theorem II}. For details, see \cite[Subsection 2.2]{LY2}.

\subsection{The category $\OI^d$}

The category $\OI^d$ is the product of $d$ copies of the category $\OI$, a subcategory of $\FI$. Its objects are $d$-tuples of finite sets $\bfn = ([n_1], [n_2], \ldots, [n_d])$, and its morphisms are $d$-tuples of non-decreasing injections $(f_1, f_2, \ldots, f_d)$. The fact that any finitely generated $\OI^d$-module over a commutative Noetherian ring is Noetherian is implied by \cite[Proposition 2.2.4]{SS}, and was also explained by Yu and the second author in \cite[Subsection 1.8]{LY2}. Similar to $\FI^d$, the category of $\OI^d$-modules has $d$ distinct shift functors $\Sigma_i$, $i \in [d]$, and we define the functor $\boldsymbol{\Sigma}$ by $\boldsymbol{\Sigma}V = \Sigma_1 V\oplus \cdots\oplus \Sigma_d V$ for every $\OI^d$-module $V$. One can check that:
\begin{equation*}
\boldsymbol{\Sigma} M(\bfn) \cong M(\bfn)^{\oplus d} \oplus \left( \bigoplus_{n_i \geqslant 1} M(\bfn - \mathbf{1}_i) \right).
\end{equation*}
The reader can check (using the same argument for $\FI^d$) that $\boldsymbol{\Sigma}$ is an inductive functor for $\OI^d$-modules and satisfies all requirements in Theorems \ref{main theorem I} and \ref{main theorem II}.

\subsection{The category $\FI_d$}

The category $\FI_d$ was introduced by Sam and Snowden in \cite{SS}. Let us recall its definition. By a \emph{$d$-coloring} on a set $Z$, we mean a map from $Z$ to $[d]$. The category $\FI_d$ has objects $[n]$, $n \in \N$. The morphisms of $\FI_d$ from $[m]$ to $[n]$ are the pairs $(f,\delta)$ where $f: [m] \to [n]$ is any injection and $\delta$ is any $d$-coloring on $[n] \setminus f([m])$. The composition of a morphism $(f, \delta): [l] \to [m]$ with a morphism $(g, \varepsilon): [m] \to [n]$ is defined to be the morphism $(h, \zeta): [l] \to [n]$ where $h=g\circ f$ and
\begin{equation*}
\zeta(z) = \left\{ \begin{array}{ll}
\delta(y) & \mbox{ if } z=g(y), \\
\varepsilon(z) & \mbox{ else. }
\end{array}\right.
\end{equation*}

Sam and Snowden \cite{SS} proved that any finitely generated $\FI_d$-module over a commutative Noetherian ring is Noetherian. Following along the same lines as \cite{CE} and \cite{CEFN} for $\FI$-modules, we first define the shift functor for $\FI_d$-modules and study its basic properties. Define a functor $\iota : \C \to \C$ by $[n] \mapsto [n+1]$, sending $i \to i+ 1$ for $i \in [n]$. If $(f,\delta): [m] \to [n]$ is a morphism of $\C$, then $\iota(f, \delta) : [m+1] \to [n+1]$ is the morphism $(\tilde{f}, \delta)$ such that $\tilde{f} (1) = 1$ and $\tilde{f}(i) = f(i-1) + 1$ for $i \geqslant 2$. The \emph{shift functor} $\Sigma : \CMod \to \CMod$ is defined by
\begin{equation*}
\Sigma V = V \circ \iota \quad \mbox{ for every }V \in \CMod.
\end{equation*}

The following lemma implies that $\Sigma$ is an inductive functor for $\FI_d$-modules satisfying all requirements in Theorems \ref{main theorem I} and \ref{main theorem II}.

\begin{lemma} \label{lemma for FI_d}
Let $\Sigma$ be defined as above. We have:
\begin{enumerate}
\item There is a natural $\FI_d$-module transformation $\theta: \mathrm{Id}^{\oplus d} \to \Sigma$;
\item For any object $[n]$ of $\FI_d$, there is an isomorphism
\begin{equation*}
 M([n])^{\oplus d}  \oplus  M([n-1])^{\oplus n} \longrightarrow \Sigma M([n])
\end{equation*}
whose restriction to $M([n])^{\oplus d}$ is the natural map
\[ \theta : M([n])^{\oplus d} \to \Sigma M([n]).\]
\item Let $S\subseteq [d]$. If $P$ is a projective $\FI_d$-module, then $D_S P$ is a projective $\FI_D$-module. If $V$ is a $\FI_d$-submodule of $P$, then $\Theta_S : V^S \to \Sigma V$ is injective.
\item If $V \neq 0$, then $\gd(DV) = \gd(V) - 1$.
\end{enumerate}
\end{lemma}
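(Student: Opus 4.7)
The plan is to handle the four parts in order, with the bulk of the work concentrated in parts (2) and (4). For (1), I would construct $\theta$ from the $d$ distinguished morphisms $\alpha_c : [n] \to [n+1]$ in $\FI_d$, one for each color $c \in [d]$, where $\alpha_c = (e_n, \delta_c)$, $e_n(i) = i+1$, and $\delta_c$ is the unique coloring that assigns color $c$ to the missing element $1 \in [n+1]$. For a $\C$-module $V$, the $c$-th component of $\theta_V$ at object $[n]$ is the $k$-linear map $V([n]) \to V([n+1]) = (\Sigma V)([n])$ induced by $\alpha_c$; naturality of $\theta$ follows by unraveling the composition rule for $\FI_d$ and noting that $\alpha_c$ commutes past any morphism in the obvious way.

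For (2), I would exhibit the required isomorphism by hand at each object $[m]$ using $(\Sigma M([n]))_m = k\FI_d([n],[m+1])$. Every pair $(f,\delta) : [n] \to [m+1]$ falls into one of two cases: either $1 \notin f([n])$, in which case $\delta$ assigns some color $c$ to $1$ and $(f,\delta)$ is the image under $\alpha_c$ of its restriction in $\FI_d([n],[m])$, giving the $M([n])^{\oplus d}$ summand with $\theta$ as its inclusion; or $1 \in f([n])$ with unique preimage $i_0 \in [n]$, which decomposes $(f,\delta)$ as the image of an element of $\FI_d([n-1],[m])$ under precomposition with the order-preserving bijection $[n-1] \cong [n] \setminus \{i_0\}$ followed by $\alpha_1$-type translation on the target, giving the $M([n-1])^{\oplus n}$ summand indexed by $i_0$. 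Naturality in $[m]$ (i.e.\ that the bijection respects composition with any morphism $[m] \to [m']$) must be checked; this is the step I expect to be the most intricate bookkeeping, since one has to track how the $\FI_d$-composition rule redistributes colorings between the two cases.

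For (3), additivity of $\Sigma$ and $D_S$ reduces everything to free modules. By (2), the image of $\theta : M([n])^{\oplus d} \to \Sigma M([n])$ is a direct summand, so quotienting by any sub-sum $\bigoplus_{s \in S} M([n])^s$ yields
\begin{equation*}
D_S M([n]) \cong M([n])^{\oplus (d-|S|)} \oplus M([n-1])^{\oplus n},
\end{equation*}
which is projective. For the injectivity claim, if $V \subseteq P$ with $P$ a direct sum of free modules, the diagram
\begin{equation*}
\xymatrix{
V^S \ar[r]^{\theta_S} \ar@{^{(}->}[d] & \Sigma V \ar[d] \\
P^S \ar[r]^{\theta_S} & \Sigma P
}
\end{equation*}
commutes; the bottom map is split injective summand-by-summand by (2), and the left vertical map is injective, so the composite $V^S \to \Sigma P$ is injective, forcing the top map to be injective as well.

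For (4), the upper bound follows by choosing a surjection $P \twoheadrightarrow V$ with $P$ a finite direct sum of free modules and $\gd(P) = \gd(V)$: right exactness of $D$ gives a surjection $DP \twoheadrightarrow DV$, and the explicit formula $DM([n]) \cong M([n-1])^{\oplus n}$ from (2) yields $\gd(DP) \leqslant \gd(P) - 1$. For the matching lower bound when $\gd(V) > 0$, I would mimic the argument used for $\CN^d$ in Lemma~\ref{lemma for N^d}: for any $0 \leqslant n < \gd(V)$, let $V'$ be the quotient of $V$ by the submodule generated in degrees $\leqslant n$; then $V' \neq 0$, so there is a minimal object $[m]$ with $V'_{[m]} \neq 0$, and necessarily $m > n \geqslant 0$. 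By minimality $V'^{\oplus d}_{[m-1]} = 0$, while $(\Sigma V')_{[m-1]} = V'_{[m]} \neq 0$, which forces $(DV')_{[m-1]} \neq 0$ and hence $\gd(DV) \geqslant \gd(DV') \geqslant m-1 \geqslant n$. The main obstacle throughout is (2); once that decomposition is firmly established, the remaining parts are formal consequences.
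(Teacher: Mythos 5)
Your proposal is correct and follows essentially the same route as the paper: the same natural transformation $\theta_\ell$ built from the morphisms $(i \mapsto i+1,\ \mathrm{col}_\ell)$, the same two-case decomposition of morphisms $(f,\delta):[n]\to[r+1]$ according to whether $1\in f([n])$ for part (2), the same reduction to free summands for (3), and the same adaptation of the $\CN^d$ argument (surjection from frees for the upper bound, quotient $V'$ and a minimal nonvanishing object for the lower bound) for (4). The only points you leave implicit — naturality of the bijection in (2), extending the injectivity argument from direct sums of frees to arbitrary projectives, and the vanishing of $DV'$ below degree $m-1$ — are routine and are equally left implicit in the paper.
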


\begin{proof}
(1): For any finite set $[n]$ and $\ell \in [d]$, we denote by
\begin{equation*}
\mathrm{incl}_n : [n] \hookrightarrow [n+1] \quad\mbox{ and } \quad \mathrm{col}_\ell: \{ 1 \} \to [d]
\end{equation*}
the inclusion map and the $d$-coloring $\mathrm{col}_\ell(1)=\ell$, respectively. The pair $(\mathrm{incl}_n, \mathrm{col}_\ell)$ is a morphism of $\FI_d$ from $[n]$ to $\iota([n]) = [n+1]$. For each $\ell\in [d]$, there is a natural morphism of functors $\eta_\ell : \mathrm{Id}_{\FI_d} \to \iota$ whose component at each object $[n]$ of $\C$ is the morphism
\begin{equation*}
(\mathrm{incl}_n, \mathrm{col}_\ell) : [n] \longrightarrow \iota([n]) = [n+1].
\end{equation*}
For any $\FI_d$-module $V$, the horizontal composition of morphisms of functors $\mathrm{Id}_V \circ \eta_{\ell}$ gives a natural $\FI_d$-module homomorphism
\begin{equation*}
\theta_{\ell} : V \longrightarrow \Sigma V.
\end{equation*}
Let $\theta = \theta_1 + \ldots + \theta_d$, which is the wanted natural $\FI_d$-module transformation.

(2): This follows easily from the following observation: for each morphism
\begin{equation*}
(f,\delta): [n] \to [r + 1],
\end{equation*}
either $1 \notin f([n])$ in which case $f([n]) \subset [r+1] \setminus \{1\}$ and there is a unique $\ell \in [d]$ such that $\delta(1) = \ell$, or there is a unique $i \in [n]$ such that $f(i)= 1$ in which case $f([n] \setminus \{i\}) \subset [r+1] \setminus \{1\}$.

(3): Any projective $\C$-module is a direct summand of a $\FI_d$-module of the form $\bigoplus_{i \in I} M([n_i])$, so it suffices to prove the assertions when $P$ is $M([n])$, in which case the assertions are immediate from (2).

(4): This can be proved using the same argument as in the proof of Lemma \ref{lemma for N^d}.
\end{proof}

\subsection{The category $\OI_d$}

The category $\OI_d$ introduced by Sam and Snowden \cite{SS} is defined as follows. The objects of $\OI_d$ are the totally ordered finite sets. The morphisms are the pairs $(f,\delta)$ where $f$ is any order-preserving injection and $\delta$ is any $d$-coloring on the complement of the image of $f$. Sam and Snowden \cite[Theorem 7.1.2]{SS} proved that any finitely generated $\mathrm{OI}_d$-module over a commutative Noetherian ring is Noetherian. There is a shift functor $\Sigma$ for $\OI_d$-modules (see \cite[Example 5.8]{GL1}) and the reader can check that all conclusions in Lemma \ref{lemma for FI_d} hold except that the isomorphism in (2) of Lemma \ref{lemma for FI_d} becomes:
\begin{equation*}
\Sigma M([n]) \cong M([n])^{\oplus d} \oplus M([n-1]).
\end{equation*}
Therefore, $\Sigma$ defined as above is an inductive functor for $\OI_d$-modules satisfying all requirements in Theorems \ref{main theorem I} and \ref{main theorem II}.

\subsection{Relative projective modules}

Let $\C$ be one of the following categories: $\CN^d$, $\FI^d$, $\OI^d$, $\FI_d$ and $\OI_d$; and let $V$ be a finitely generated $\C$-module over a commutative Noetherian ring. We have constructed an inductive functor for $\CMod$ satisfying all conditions specified in Theorems \ref{main theorem I} and \ref{main theorem II}. Therefore, by Theorem \ref{main theorem II}, we know that $\reg(V)$ is a finite number, establishing Theorem \ref{main theorem III}, though we have not obtained an explicit upper bound.

In this subsection we deduce more specific information about homology groups of $V$. In particular, we will classify those $\C$-modules whose higher homology groups vanish: they are \emph{relative projective modules}, or \emph{$\sharp$-filtered modules}, a notion introduced by Nagpal for $\FI$-modules in \cite{Nag}.

Let $x$ be a fixed object of $\C$. Then the group algebra $k\C(x, x)$ \footnote{When $\C$ is one of the above five categories, $\C(x, x)$ is a group. Categories satisfying this condition are called \emph{EI} categories, where EI means that endomorphisms are isomorphisms.} is subalgebra of the category algebra $k\C$. Given a $k\C(x, x)$-module $T$, since $M(x)$ is a $(k\C, k\C(x,x))$-bimodule, one can define an \emph{induced module} $M(x) \otimes_{k\C(x, x)} T$. A finitely generated $\C$-module is called a \emph{relative projective module} if $V$ has a filtration whose filtration factors are induced modules. Note that by the finitely generated property of $V$, this filtration must have finite length.

Note that the category $\C$ has a special combinatorial property; that is, for each pair of objects $x$ and $y$ with $x \leqslant y$, the group $\C(x, x)$ acts transitively on the morphism set $\C(x, y)$ (provided that it is nonempty) from the right side. Therefore,
\begin{equation*}
M(x) = \bigoplus_{y \geqslant x} k\C(x, y)
\end{equation*}
is a free right $k\C(x, x)$-module. Therefore, the functor $M(x) \otimes_{k\C(x, x)} -$ is exact. Moreover, it is easy to verify the following fact: for each $k\C(x, x)$-module $T$, one has
\begin{equation*}
H_0 (M(x) \otimes_{k\C(x,x)} T) \cong T.
\end{equation*}

The above fact actually implies the following characterization of relative projective modules.

\begin{proposition} \label{characterizations of relative projective modules}
The following statements are equivalent:
\begin{enumerate}
\item $V$ is relative projective;
\item $H_i(V) = 0$ for all $i \geqslant 1$;
\item $H_1(V) = 0$.
\end{enumerate}
\end{proposition}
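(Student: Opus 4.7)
The implication (1) $\Rightarrow$ (2) reduces to the case of an induced module $M(x)\otimes_{k\C(x,x)} T$, since the general case follows by induction along the finite induced filtration of $V$ and the long exact sequences of $\Tor$ applied at each filtration step. For the induced case, pick a projective resolution $Q^\bullet \to T \to 0$ of $T$ over $k\C(x,x)$. Because $M(x)$ is free, hence flat, as a right $k\C(x,x)$-module, the complex $M(x)\otimes_{k\C(x,x)} Q^\bullet \to M(x)\otimes_{k\C(x,x)} T \to 0$ is exact in $\CMod$, and each of its terms is a direct summand of a free $\C$-module, hence projective. Applying $H_0$ and using the identity $H_0(M(x)\otimes_{k\C(x,x)} T') \cong T'$ recorded just above the proposition, this complex collapses to $Q^\bullet \to T \to 0$ regarded as a complex of $\C$-modules concentrated at $x$, which is exact in positive degrees. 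Hence $H_i(M(x)\otimes_{k\C(x,x)} T) = 0$ for all $i \geq 1$. The implication (2) $\Rightarrow$ (3) is immediate.

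For the crucial implication (3) $\Rightarrow$ (1), I induct on the nonnegative integer $\gd(V) - n$, where $n = \min\{|x| : V_x \neq 0\}$ is the smallest rank in the support of $V$ (the statement is trivial for $V = 0$). Let $x_1, \ldots, x_r$ be the objects of rank $n$ with $V_{x_i} \neq 0$ and set $W_i = M(x_i)\otimes_{k\C(x_i,x_i)} V_{x_i}$. Form $W = W_1 \oplus \cdots \oplus W_r$ together with the natural evaluation map $\varphi : W \to V$. By construction $\varphi$ restricts to the identity on rank $n$, so both $K := \ker\varphi$ and $C := \mathrm{coker}\,\varphi$ are supported in ranks strictly greater than $n$. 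Set $I := \mathrm{im}\,\varphi$ and consider the long exact sequences of $\Tor$ arising from $0 \to K \to W \to I \to 0$ and $0 \to I \to V \to C \to 0$. Since each $W_i$ is induced, $H_i(W) = 0$ for $i \geq 1$ by the first implication. Moreover $H_0(W)$ is concentrated at rank $n$, where the composite $H_0(W) \to H_0(I) \to H_0(V)$ is the identity, and at all other ranks the composite vanishes; the composite is therefore injective. Combined with the surjectivity of $H_0(W) \to H_0(I)$ from the first long exact sequence, this forces $H_0(W) \to H_0(I)$ to be an isomorphism, and then the second long exact sequence yields
\begin{equation*}
H_1(C) \;=\; \ker\bigl(H_0(I) \to H_0(V)\bigr) \;=\; 0.
\end{equation*}

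Since $\gd(C) \leq \gd(V)$ and the smallest rank in the support of $C$ exceeds $n$, the invariant has strictly decreased, and the induction hypothesis applied to $C$ gives that $C$ is relative projective. By the already proved implication (1) $\Rightarrow$ (2) then $H_i(C) = 0$ for all $i \geq 1$; in particular $H_2(C) = 0$, which the second long exact sequence turns into $H_1(I) = 0$. Feeding $H_1(I) = 0$ and $H_1(W) = 0$ into the first long exact sequence, together with the isomorphism $H_0(W) \cong H_0(I)$, forces $H_0(K) = 0$, whence $\gd(K) = -1$ and $K = 0$. Thus $0 \to W \to V \to C \to 0$ is a short exact sequence with $W$ a direct sum of induced modules and $C$ relative projective, and a filtration of $V$ by induced modules is obtained by concatenating $0 \subset W_1 \subset W_1 \oplus W_2 \subset \cdots \subset W$ with any lift of an induced filtration of $C$. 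The main obstacle is the sequencing of the diagram chase: one must first extract $H_1(C) = 0$ using only the hypotheses on $V$ and $W$, so that the induction hypothesis can be applied to $C$ and retroactively deliver the $H_2(C) = 0$ needed to conclude the injectivity of $\varphi$.
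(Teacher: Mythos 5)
Your proof is correct, but the hard implication $(3)\Rightarrow(1)$ is organized quite differently from the paper's argument. The paper inducts on the number of objects supporting $H_0(V)$, peels off an induced module \emph{from the top}: it picks a maximal-rank object $x$ with $(H_0(V))_x\neq 0$, takes the quotient $V''$ of $V$ by the submodule $V'$ generated in the remaining supporting degrees, and kills the kernel $K$ of $M(x)\otimes_{k\C(x,x)}V''_x \to V''$ by a pure degree count ($\gd(K)\leqslant \max\{|x|,\hd_1(V'')\}\leqslant\gd(V)$, while any nonzero $K$ would be supported strictly above $|x|=\gd(V)$); relative projectivity of $V'$ then comes from the induction hypothesis once $H_1(V')=0$ is read off the long exact sequence. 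You instead peel \emph{from the bottom}: you cover all minimal-rank generators at once by the submodule image of $W=\bigoplus_i M(x_i)\otimes_{k\C(x_i,x_i)}V_{x_i}$, induct on $\gd(V)-n$, and kill the kernel homologically rather than by a degree bound, which forces the sequencing you flag yourself --- first extract $H_1(C)=0$ to invoke the induction hypothesis on the cokernel $C$, then feed $H_2(C)=0$ back through the two long exact sequences to get $H_1(I)=0$ and hence $H_0(K)=0$, $K=0$. Both routes rest on the same structural inputs (exactness of $M(x)\otimes_{k\C(x,x)}-$, the identity $H_0(M(x)\otimes T)\cong T$, the absence of morphisms between distinct objects of equal rank, and the Nakayama-type fact that $H_0=0$ forces the module to vanish), and your treatment of $(1)\Rightarrow(2)$ via a full projective resolution and flatness is just a packaged version of the paper's dimension-shifting step. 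What the paper's top-down version buys is a shorter kernel argument that does not need the complementary piece to be known relative projective in advance; what your bottom-up version buys is a filtration built from the lowest degree upward, in the spirit of the usual construction of $\sharp$-filtered $\FI$-modules, at the cost of the more delicate two-pass diagram chase.
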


\begin{proof}
$(1) \Rightarrow (2)$: Suppose that $V$ is relative projective. Then it has a filtration of finite length whose filtration factors are induced modules. By a standard homological argument, it is enough to show that the higher homology groups of induced modules all vanish; that is, without loss of generality one can suppose that $V = M(x) \otimes_{k\C(x, x)} T$ for a certain object $x$ and a finitely generated $k\C(x, x)$-module $T$.

Take a short exact sequence $0 \to T' \to P \to T \to 0$ of $k\C(x, x)$-modules such that $P$ is projective. Applying the functor $H_0 \circ (M(x) \otimes_{k\C(x, x)} -)$ we recover this short exact sequence. This implies that $H_1(V) = 0$. Replacing $T$ by $T'$ we deduce that $H_2(V) \cong H_1(V') = 0$ where $V' = M(x) \otimes_{k\C(x, x)} T'$. Recursively, one can show that $H_i(V) = 0$ for all $i \geqslant 1$.

$(2) \Rightarrow (3)$: Trivial.

$(3) \Rightarrow (1)$: We use induction on the size of the following set
\begin{equation*}
\Lambda = \{x \in \Ob \C \mid (H_0(V))_x \neq 0 \}.
\end{equation*}
Clearly, we can assume that $\Lambda$ is nonempty since otherwise $V = 0$. Choose a maximal object $x$ in $S$. It is clear that $|x| = \gd(V)$. Let $V'$ be the submodule of $V$ generated by
\begin{equation*}
\bigoplus_{\lambda \in \Lambda \setminus \{ x \}} V_{\lambda}.
\end{equation*}
We obtain a short exact sequence $0 \to V' \to V \to V'' \to 0$. Applying the homology functor we deduce that $H_1(V'')$ is isomorphic to a submodule of $H_0(V')$, and hence $\hd_1(V'') \leqslant \gd(V')$.

Note that $V''$ is a nonzero $\C$-module generated by $V_x''$. Therefore, there is a short exact sequence
\begin{equation*}
0 \to K \to M(x) \otimes _{k\C(x, x)} V_x'' \to V'' \to 0.
\end{equation*}
By considering the long exact sequence of homology groups associated to this short exact sequence, we deduce that
\begin{equation*}
\gd(K) \leqslant \max \{\gd(M(x) \otimes _{k\C(x, x)} V_x''), \, \hd_1(V'') \} \leqslant \max \{|x|, \, \gd(V') \} = |x| = \gd(V).
\end{equation*}
However, it is obvious that if there is an object $y$ such that $K_y \neq 0$, then $y > x$, and hence $|y| > |x|$. In particular, $\gd(K) > |x|$ in this case. Therefore, the only possibility is that $K = 0$. Consequently, $V''$ is an induced module.

Return to the short exact sequence $0 \to V' \to V \to V'' \to 0$. We have shown that $V''$ is an induced module. To prove that $V$ is relative projective, it suffices to verity that $V'$ is relative projective as well. By the induction hypothesis (since $(H_0(V'))_y \neq 0$ if and only if $y \in S \setminus \{x\}$), we only need to show that $H_1(V') = 0$. But this is clear from the long exact sequence of homology groups associated to this short exact sequence, since $H_1(V) = 0$, and $H_i(V'') = 0$ for $i \geqslant 1$ as $V''$ is an induced module.
\end{proof}

\subsection{Minimal linear relative projective resolutions for truncated modules}

As before, let $\C$ be one of the following categories: $\CN^d$, $\FI^d$, $\OI^d$, $\FI_d$ and $\OI_d$; and let $V$ be a finitely generated $\C$-module over a commutative Noetherian ring. We make the following definition.

\begin{definition} \normalfont
The degree of $V$ is defined to be
\begin{equation*}
\deg{V} = \sup \{ |x| \mid V_x \neq 0\}.
\end{equation*}
By convention, we set the degree of the zero module to be $-1$. The $\C$-module $V$ is called a \emph{finitely supported module} if $\deg(V)$ is finite.
\end{definition}

Clearly, $\hd_i(V) = \deg (H_i(V))$. Furthermore, if $\deg(V)$ is finite, then there are only finitely many objects $x$ such that $V_x \neq 0$ since $\C$ is locally bounded.

For each $n \in \N$, we can define a \emph{truncation functor} $\tau_n$ in the category of finitely generated $\C$-modules by the following rule: $\tau_n V$ is a submodule of $V$ satisfying
\begin{equation*}
(\tau_n V)_x =
\begin{cases}
V_x, & \text{if } |x| \geqslant n;\\
0, & \text{otherwise}.
\end{cases}
\end{equation*}
There is a short exact sequence
\begin{equation*}
0 \to \tau_n V \to V \to V/\tau_n V \to 0
\end{equation*}
where $V/\tau_n V$ is a finitely supported module.

Slightly modifying the proof of \cite[Theorem 4.8]{L1}, one can obtain an upper bound for the regularity of finitely supported modules.

\begin{lemma} \label{reg of finitely supported modules}
Let $V$ be a finitely generated $\C$-module. Then $\reg(V) \leqslant \deg(V)$.
\end{lemma}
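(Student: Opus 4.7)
The plan is to induct on $\deg(V)$, with the base case $V = 0$ being immediate since $\reg(0) = \deg(0) = -1$ by convention. For the inductive step, the empty set $\emptyset \subseteq [d]$ is always a nil subset for $V$, and $D_{\emptyset}V = FV$. The crucial input is Proposition~\ref{bounding hd}(1) applied with $S = \emptyset$, which yields
\begin{equation*}
\reg(V) \leqslant \reg(FV) + 1,
\end{equation*}
provided the hypothesis $\reg(FM(x)) \leqslant |x|$ holds for every $x \in \Ob \C$. This hypothesis will hold uniformly across the categories $\CN^d$, $\FI^d$, $\OI^d$, $\FI_d$, $\OI_d$: from the explicit formulas for $F$ recorded earlier in this section, $FM(x)$ decomposes in each case as a finite direct sum of free modules $M(y)$ with $|y| \leqslant |x|$, and each projective $M(y)$ has $\reg(M(y)) = |y|$.

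The second ingredient is that $F$ strictly lowers the degree of the support on finitely supported modules: if $\deg(V) = n$, then $\deg(FV) \leqslant n - 1$. This is because each constituent shift functor of $F$ is the pull-back along a functor $\iota$ on $\C$ that raises the rank of every object by $1$, so a value $(\Sigma V)_x = V_{\iota(x)}$ can only be nonzero when $|\iota(x)| = |x|+1 \leqslant n$. Since Lemma~\ref{basic lemma}(3) ensures that $FV$ is again finitely generated, and $FV$ is clearly finitely supported, the inductive hypothesis applies to $FV$, giving
\begin{equation*}
\reg(FV) \leqslant \deg(FV) \leqslant n - 1.
\end{equation*}
Combining with the bound above yields $\reg(V) \leqslant n = \deg(V)$.

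I do not anticipate any serious obstacle; once Proposition~\ref{bounding hd} is in hand, the argument is essentially a one-line induction. The only work is a uniform case-check that for each of the five categories both $\reg(FM(x)) \leqslant |x|$ and $\deg(FV) < \deg(V)$ hold, and these are immediate from the shift-functor formulas already tabulated in this section.
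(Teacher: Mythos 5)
Your proof is correct and is essentially the paper's own argument: the paper likewise inducts on $\deg(V)$, combining $\reg(V) \leqslant \reg(FV) + 1$ (its Corollary \ref{a corollary}, which is exactly Proposition \ref{bounding hd}(1) with $S = \emptyset$) with the observation that $\deg(FV) = \deg(V) - 1$ for nonzero finitely supported $V$. The only differences are cosmetic: you spell out the verification of $\reg(FM(x)) \leqslant |x|$ and the rank-raising reason for the degree drop, which the paper leaves implicit, while the paper additionally notes the trivial case $\deg(V) = \infty$, which your induction tacitly skips.
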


\begin{proof}
The conclusion holds trivially for the case that $\deg(V) = \infty$. Otherwise, $V$ is a finitely supported module. In the latter case, one can apply the inductive functor constructed previously for $\C$ and easily observes that $\deg(FV) = \deg(V) - 1$ whenever $V$ is nonzero. Therefore, we can use an induction on $\deg(V)$. The conclusion holds for $V = 0$ trivially. For a nonzero $V$, one has
\begin{equation*}
\reg(V) \leqslant \reg(FV) + 1 \leqslant \deg(FV) + 1 = \deg(V) -1 + 1 = \deg(V),
\end{equation*}
where the first inequality follows from Corollary \ref{a corollary}, the second inequality follows from the induction hypothesis since $\deg(FV) = \deg(V) -1$. The conclusion follows from induction.
\end{proof}

Now we restate and prove Theorem \ref{main theorem IV}.

\begin{theorem}
Let $k$ be a commutative Noetherian ring, and let $\C$ be one of the following categories:
\begin{equation*}
\FI^d, \, \OI^d, \, \FI_d, \, \OI_d.
\end{equation*}
Let $V$ be a finitely generated $\C$-module. Then for $n \geqslant \reg(V)$, the truncated module $\tau_n V$ has a resolution $F^{\bullet} \to \tau_n V \to 0$ satisfying the following conditions: for $i \geqslant 0$,
\begin{enumerate}
\item $F^i$ is a relative projective module;
\item $H_0(F^i) \cong H_i(\tau_n V)$;
\item if $F^i \neq 0$, then $F^i$ is generated by its values on objects $x$ with $|x| = n + i$.
\end{enumerate}
In particular, if $H_i(\tau_n V) \neq 0$, then $\hd_i(\tau_n V) = n+i$.
\end{theorem}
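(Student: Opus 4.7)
The plan is to build $F^{\bullet}$ iteratively: at each stage I would cover the current syzygy by a finite direct sum of induced modules concentrated in its minimum degree, and track how the regularity transforms under taking kernels.

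First I would establish the auxiliary bound $\reg(\tau_n V) \leqslant n$. The long exact sequence of homology associated to $0 \to \tau_n V \to V \to V/\tau_n V \to 0$ gives an exact piece $H_{i+1}(V/\tau_n V) \to H_i(\tau_n V) \to H_i(V)$, so
\begin{equation*}
\hd_i(\tau_n V) \leqslant \max\{\hd_i(V),\, \hd_{i+1}(V/\tau_n V)\}.
\end{equation*}
Since $V/\tau_n V$ is finitely supported with $\deg(V/\tau_n V) \leqslant n-1$, Lemma \ref{reg of finitely supported modules} bounds $\hd_{i+1}(V/\tau_n V) \leqslant n+i$; combined with $\hd_i(V) \leqslant \reg(V)+i \leqslant n+i$ this gives the claim. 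In particular $\gd(\tau_n V) \leqslant n$, and since $\tau_n V$ vanishes in degrees $<n$, if $\tau_n V \neq 0$ then $\gd(\tau_n V) = n$ and $\tau_n V$ is generated by its values on rank-$n$ objects; here one uses the easy combinatorial observation that in each of $\FI^d$, $\OI^d$, $\FI_d$, $\OI_d$ every morphism $x\to y$ with $|x|<n<|y|$ factors through some object of rank $n$.

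Next I would define
\begin{equation*}
F^0 = \bigoplus_{|x|=n} M(x) \otimes_{k\C(x,x)} (\tau_n V)_x.
\end{equation*}
Local boundedness makes this a finite direct sum and finite generation makes each $(\tau_n V)_x$ finitely generated over $k\C(x,x)$, so $F^0$ is a finitely generated relative projective $\C$-module. The evaluation map $F^0 \to \tau_n V$ is surjective by the preceding step and is an isomorphism on rank-$n$ objects; a direct calculation shows $H_0(M(x)\otimes_{k\C(x,x)} T) \cong T$ concentrated on the object $x$, hence $H_0(F^0) \cong H_0(\tau_n V)$ via the evaluation. Setting $Z^1 = \ker(F^0 \to \tau_n V)$, the isomorphism in degree $n$ forces $Z^1_x = 0$ for $|x| \leqslant n$, while Proposition \ref{characterizations of relative projective modules} gives $H_i(F^0) = 0$ for $i \geqslant 1$. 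The long exact sequence of homology then yields $H_i(Z^1) \cong H_{i+1}(\tau_n V)$ for all $i \geqslant 0$, so $\hd_i(Z^1) \leqslant n+1+i$ and $\reg(Z^1) \leqslant n+1$.

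The module $Z^1$ now satisfies the same hypotheses as $\tau_n V$ with $n$ replaced by $n+1$, so iterating produces relative projectives $F^i$ fitting into
\begin{equation*}
\ldots \to F^2 \to F^1 \to F^0 \to \tau_n V \to 0,
\end{equation*}
each $F^i$ a finite direct sum of induced modules generated on rank-$(n+i)$ objects. Iterating the dimension shift yields $H_0(F^i) \cong H_0(Z^i) \cong H_i(\tau_n V)$, which is concentrated on rank-$(n+i)$ objects; consequently whenever $H_i(\tau_n V)\neq 0$ one has $\hd_i(\tau_n V) = n+i$. The main technical obstacle is the recursive bookkeeping: one needs $Z^i$ to remain finitely generated at every stage (which follows from local Noetherianity, established for these four categories in the cited work), to vanish in the correct degree range (inherited from the construction), and to have regularity bounded by $n+i$ (inherited via the dimension shift), so that the induction can be continued indefinitely.
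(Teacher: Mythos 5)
Your proposal is correct and follows essentially the same route as the paper: bound $\hd_i(\tau_n V) \leqslant n+i$ via the long exact sequence for $0 \to \tau_n V \to V \to V/\tau_n V \to 0$ together with the regularity bound for finitely supported modules, then cover by the induced module $\bigoplus_{|x|=n} M(x)\otimes_{k\C(x,x)} (\tau_n V)_x$, use vanishing of higher homology of relative projectives to dimension-shift onto the kernel, and iterate with $n$ replaced by $n+1$. The only cosmetic difference is that the paper bounds $\gd(K^1)$ directly by $\max\{\gd(F^0), \hd_1(\tau_n V)\}$ rather than via the dimension shift, and your factorization remark is not actually needed once $\hd_0(\tau_n V)\leqslant n$ is in hand.
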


\begin{proof}
Firstly, we show that $\hd_i(\tau_n V) \leqslant n + i$. By the long exact sequence
\begin{equation*}
\ldots \to H_{i+1} (V/\tau_nV) \to H_i(\tau_n V) \to H_i(V) \to \ldots,
\end{equation*}
induced by the short exact sequence $0 \to \tau_n V \to V \to V/\tau_n V \to 0$ we have
\begin{equation*}
\hd_i(\tau_n V) \leqslant \max \{\hd_i(V), \, \hd_{i+1} (V/\tau_n V) \}.
\end{equation*}
Clearly
\begin{equation*}
\hd_i(V) \leqslant \reg(V) + i \leqslant n + i.
\end{equation*}
By Lemma \ref{reg of finitely supported modules}, we have
\begin{equation*}
\hd_{i+1}(V/\tau_n V) \leqslant (n - 1) + i + 1 = n + i.
\end{equation*}
Combining these inequalities, we conclude that $\hd_i(\tau_n V) \leqslant n+i$.

Now we construct the linear relative projective resolution. If $\tau_n V$ is a relative projective module, then
\begin{equation*}
0 \to \tau_nV \to \tau_n V \to 0
\end{equation*}
is the wanted relative projective resolution. Otherwise, note that $n \geqslant \reg(V) \geqslant \gd(V)$. Consequently, $\tau_n V$ is generated by the values $(\tau_n V)_x = V_x$ on objects $x$ with $|x| = n$. In other words,
\begin{equation*}
\tau_n V = \sum _{|x| = n} k\C \cdot V_x.
\end{equation*}
Therefore, we have a surjection
\begin{equation*}
F^0 = \bigoplus _{|x| = n} M(x) \otimes_{k\C(x, x)} V_x \to \tau_n V \to 0,
\end{equation*}
where the existence of such a surjection relies on the condition that for two distinct objects $x$ and $y$ with $|x| = |y|$, there is no morphisms between $x$ and $y$. Note that $H_0(F^0) \cong H_0(\tau_n V)$. Let $K^1$ be the kernel of this surjection which is nonzero, and we have
\begin{equation*}
\gd(K^1) \leqslant \max \{ \gd(F^0), \, \hd_1(\tau_n V) \} \leqslant n+1
\end{equation*}
since $\gd(F^0) = n$ and $\hd_1(\tau_n V) \leqslant n + 1$ by the conclusion proved in the previous paragraph. Thus $K^1$ is generated by the values $K^1_x$ with $|x| = n+1$ since $K^1_y = 0$ for $|y| \leqslant n$. Moreover, from the short exact sequence
\begin{equation*}
0 \to K^1 \to F^0 \to \tau_n V \to 0
\end{equation*}
and Proposition \ref{characterizations of relative projective modules} one deduces that $H_i(K^1) \cong H_{i+1} (\tau_n V)$ for $i \geqslant 0$, so $\hd_i(K^1) \leqslant n+i + 1$ for $i \geqslant 0$.

If $K^1$ is a relative projective module, then
\begin{equation*}
0 \to K^1 \to F^0 \to \tau_n V \to 0
\end{equation*}
is the wanted relative projective resolution. Otherwise, replacing $\tau_n V$ by $K^1$ in the previous argument, we can get a short exact sequence
\begin{equation*}
0 \to K^2 \to F^1 = \bigoplus _{|x| = n+1} M(x) \otimes_{k\C(x, x)} K^1_x \to K^1 \to 0
\end{equation*}
with $H_0(F^1) \cong H_0(K^1) \cong H_1(\tau_n V)$ and show that $K^2$ is nonzero and is generated by its values on objects $x$ with $|x| = n+2$. Recursively, we can construct the desired linear relative projective resolution.
\end{proof}

Two immediate corollaries are:

\begin{corollary}
Let $V$ be a finitely generated $\FI$-module which is not a relative projective module. Then for $n \geqslant \gd(V) + \hd_1(V) - 1$, $H_i(\tau_n V)$ is generated in degree $n+i$.
\end{corollary}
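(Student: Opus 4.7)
The plan is to deduce this from Theorem \ref{main theorem IV} combined with the Church--Ellenberg regularity bound \cite[Theorem A]{CE} for finitely generated $\FI$-modules. Since $V$ is not relative projective, Proposition \ref{characterizations of relative projective modules} forces $H_1(V) \neq 0$, so $\hd_1(V) \geqslant 0$ and the Church--Ellenberg inequality
\[ \reg(V) \leqslant \gd(V) + \hd_1(V) - 1 \]
gives a legitimate upper bound. Any integer $n \geqslant \gd(V) + \hd_1(V) - 1$ therefore also satisfies $n \geqslant \reg(V)$, placing us in the range where Theorem \ref{main theorem IV} is applicable.

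Next I would invoke Theorem \ref{main theorem IV} for this $n$ to produce a resolution $F^{\bullet} \to \tau_n V \to 0$ by relative projective modules with $H_0(F^i) \cong H_i(\tau_n V)$ for every $i$ and each nonzero $F^i$ generated in degree exactly $n+i$. Examining the inductive construction in the proof of that theorem, one sees that each $F^i$ is a direct sum $\bigoplus_{|x|=n+i} M(x) \otimes_{k\C(x,x)} T_x^i$ of induced modules concentrated at rank $n+i$. Now for any induced module $M(x) \otimes_{k\C(x,x)} T$, the zeroth homology is $T$ concentrated on the single object $x$; taking direct sums, $H_0(F^i)$ is supported entirely on objects of rank $n+i$.

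Transporting this through the isomorphism $H_0(F^i) \cong H_i(\tau_n V)$ then shows that $H_i(\tau_n V)$ itself is supported in rank $n+i$, and is therefore trivially generated in degree $n+i$ (the case $H_i(\tau_n V) = 0$ is vacuous). No step presents a substantive obstacle: the genuine work has already been done by Theorem \ref{main theorem IV} and the Church--Ellenberg bound, and what remains is only to observe that the non-relative-projective hypothesis on $V$ is exactly what makes $\gd(V) + \hd_1(V) - 1$ a sensible replacement for $\reg(V)$ in the hypothesis of Theorem \ref{main theorem IV}.
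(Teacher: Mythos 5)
Your reduction to Theorem \ref{main theorem IV} via the Church--Ellenberg bound reproduces the first half of the paper's argument, but the proposal stops short of what the corollary is actually asserting. The statement is meant in the strong sense: each $H_i(\tau_n V)$ is nonzero and generated exactly in degree $n+i$. Granting Theorem \ref{main theorem IV}, the concentration in degree $n+i$ is automatic from $H_0(F^i)\cong H_i(\tau_n V)$, exactly as you observe, so if the case $H_i(\tau_n V)=0$ were really ``vacuous'' the corollary would add nothing to Theorem \ref{main theorem IV}, and the hypothesis that $V$ is not relative projective would do no work beyond legitimizing the bound $\gd(V)+\hd_1(V)-1$. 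The paper reads the conclusion the other way: after quoting $\reg(V)\leqslant \gd(V)+\hd_1(V)-1$ from \cite[Theorem A]{CE}, its entire proof consists of showing that $H_i(\tau_n V)\neq 0$, by contradiction: if some $H_i(\tau_n V)$ vanished, then \cite[Theorem 1.3]{LY1} would force $V$ to be a relative projective module, contrary to hypothesis.

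That nonvanishing is the missing idea in your proposal, and it cannot be obtained from the tools you invoke. Proposition \ref{characterizations of relative projective modules} ties relative projectivity of $\tau_n V$ only to the vanishing of $H_1(\tau_n V)$; to rule out the vanishing of an arbitrary $H_i(\tau_n V)$ one needs the FI-specific result (Li--Yu \cite{LY1}, see also Ramos) that the vanishing of a single higher homology group of a finitely generated $\FI$-module already forces it to be relative projective ($\sharp$-filtered), and one must then transfer this information back to $V$ to contradict the hypothesis --- this is precisely the step the paper delegates to \cite[Theorem 1.3]{LY1} and which your argument never engages. Your opening remark that $H_1(V)\neq 0$, so that $\hd_1(V)\geqslant 0$ and the Church--Ellenberg inequality applies, is correct and consistent with the paper; but under your reading the non-relative-projective hypothesis plays no further role, which is a signal that what you have proved is a weaker statement than the one intended.
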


\begin{proof}
By \cite[Theorem A]{CE}, $\reg(V) \leqslant \gd(V) + \hd_1(V) - 1$. It suffices to show that $H_i(\tau_n V) \neq 0$. If this happens, then from \cite[Theorem 1.3]{LY1} we deduce that $V$ is a relative projective module, contradicting the given condition.
\end{proof}

\begin{corollary}
Let $V$ be a finitely generated $\C$-module over a field of characteristic 0. Then for every $n \geqslant \reg(V)$, $\tau_nV$ is a Koszul module.
\end{corollary}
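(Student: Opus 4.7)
The plan is to combine Theorem \ref{main theorem IV} with the classical observation that, over a field of characteristic zero, relative projective $\C$-modules are genuinely projective. Theorem \ref{main theorem IV} already supplies, for $n \geqslant \reg(V)$, a resolution $F^{\bullet} \to \tau_n V \to 0$ in which each $F^i$ is relative projective, each nonzero $F^i$ is generated in the single degree $n+i$, and $H_0(F^i) \cong H_i(\tau_n V)$. Once ``relative projective'' is upgraded to ``projective'', clauses (1) and (3) will deliver linearity and clause (2) will deliver minimality.

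To carry out the upgrade I would unfold the definition of relative projective: each $F^i$ admits a finite filtration whose factors have the form $M(x) \otimes_{k\C(x,x)} T_x$ for various $k\C(x,x)$-modules $T_x$. For $\C \in \{\OI^d, \OI_d\}$ the endomorphism group $\C(x,x)$ is trivial, so every induced module is already free. For $\C \in \{\FI^d, \FI_d\}$, $\C(x,x)$ is a product of symmetric groups, whose group algebra is semisimple over a field of characteristic zero by Maschke's theorem; hence $T_x$ is a projective $k\C(x,x)$-module, and because $M(x)$ is a free right $k\C(x,x)$-module, $M(x) \otimes_{k\C(x,x)} T_x$ is a direct summand of a free $\C$-module and thus projective. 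Since every filtration factor is projective, each $F^i$ is itself projective.

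It remains to translate the isomorphism $H_0(F^i) \cong H_i(\tau_n V)$ into minimality. The complex $k\C/\mm \otimes_{k\C} F^{\bullet}$ computes $H_{\bullet}(\tau_n V)$, and its $i$-th term is precisely $H_0(F^i)$; the natural surjection $H_0(F^i) \twoheadrightarrow H_i(\tau_n V)$ being an isomorphism forces every differential in this complex to vanish, which is the standard characterization of a minimal projective resolution. Combined with clause (3) of Theorem \ref{main theorem IV}, this identifies $F^{\bullet} \to \tau_n V \to 0$ as a linear minimal projective resolution, so $\tau_n V$ is a Koszul module in the classical sense.

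I do not foresee any serious obstacle: the heavy lifting has been done in Theorem \ref{main theorem IV}, and this corollary is essentially a translation between two pieces of terminology (relative projective versus projective, and minimality phrased through $H_0$). The only genuinely new input is Maschke semisimplicity of $k\C(x,x)$ in characteristic zero, which is precisely the observation flagged in the discussion immediately preceding Theorem \ref{main theorem IV}.
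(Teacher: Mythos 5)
Your proposal is correct and follows essentially the same route as the paper, whose proof is simply the one-line observation that over a field of characteristic $0$ the relative projective resolution of Theorem \ref{main theorem IV} is already a minimal linear projective resolution; you merely spell out the details (Maschke semisimplicity of $k\C(x,x)$ upgrading induced modules to projectives, and minimality extracted from $H_0(F^i)\cong H_i(\tau_n V)$ together with the linearity clause). Nothing in your argument deviates from or adds to the paper's intended reasoning beyond this welcome elaboration.
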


\begin{proof}
When $k$ is a field of characteristic 0, the relative projective resolution we constructed is a minimal linear projective resolution of $\tau_n V$.
\end{proof}


\begin{thebibliography}{99}
\bibitem{Av} L. Avramov, \textit{Regularity of modules over a Koszul algebra}, J. Algebra 153 (1992), 85-90.

\bibitem{AIS} L. Avramov, S. Iyengar and L. Sega, \textit{Free resolutions over short local rings}, J. London Math. Soc. 78 (2008) 459-476.

\bibitem{AP} L. Avramov and I. Peeva, \textit{Finite regularity and Koszul algebras}, Amer. Math. J. 123 (2001), 275-281.

\bibitem{BH} W. Bruns and J. Herzog, \textit{On multigraded resolutions}, Math. Proc. Camb. Phil. Soc., 118 (1995), 245–257.

\bibitem{Ca1} K. Casto, \textit{$\FI_G$-modules and arithmetic statistics}, preprint, arXiv:1703.07295.

\bibitem{Ca2} K. Casto, \textit{$\FI_G$-modules, orbit configuration spaces, and complex reflection groups}, preprint, arXiv:1608.06317.

\bibitem{CD} O. Celikbas and H. Dao, \textit{Asymptotic Behavior of Ext functors for modules of finite complete intersection dimension}, Math. Z., 269 (2011), 1005-1020.

\bibitem{CDeno} H. Charalambous and C. Deno, \textit{multigraded modules}, New York J. Math. 7 (2001), 1-6.

\bibitem{CE} T. Church and J. Ellenberg, \textit{Homology of FI-modules}, Geom. Topol. 21-4 (2017), 2373-2418, arXiv:1506.01022.

\bibitem{CEF} T. Church, J. Ellenberg, B. Farb, \textit{FI-modules and stability for representations of symmetric groups}, Duke Math. J. 164 (2015), no. 9, 1833-1910, arXiv:1204.4533.

\bibitem{CEF2} T. Church, J. Ellenberg, B. Farb, \textit{Representation stability in cohomology and asymptotics for families of varieties over finite fields},  Algebraic topology: applications and new directions, 1-54, Contemp. Math., 620, Amer. Math. Soc., Providence, RI, 2014,  arXiv:1309.6038.

\bibitem{CEFN}  T. Church, J. Ellenberg, B. Farb, R. Nagpal, \textit{FI-modules over Noetherian rings}, Geom. Topol. 18-5 (2014) 2951-2984. arXiv:1210.1854.

\bibitem{CF} T. Church and B. Farb, \textit{Representation theory and homological stability}, Adv. Math., 245 (2013), 250-314.

\bibitem{CP} T. Church and A. Putman, \textit{Generating the Johnson filtration},  Geom. Topol. 19 (2015), no. 4, 2217-2255, arXiv:1311.7150.

\bibitem{CHT} S.D. Cutkosky, J. Herzog and N.V. Trung, \emph{Asymptotic behaviour of the Castelnuovo-Mumford regularity}, Compositio Math. 118 (1999), 243–261.

\bibitem{DHS} H. Dao, C. Huneke, J. Schweig, \textit{Bounds on the regularity and projective dimension of ideals associated to graphs}, J. Algebraic Combinatorics, 38 (2013), 37-55.

\bibitem{Dj} A. Djament, \textit{Des propri\'et\'es de finitude des foncteurs polynomiaux},  Fundamenta Mathematicae 233 (2016), p. 197-256, arXiv:1308.4698.

\bibitem{EG} D. Eisenbud and S. Goto, \textit{Linear free resolutions and minimal multiplicities}, J. Algebra 88 (1984), 107-184.

\bibitem{ES} D. Eisenbud and F. Schreyer, \textit{Betti numbers of graded modules and cohomology of vector bundles}, J. Amer. Math. Soc. 22 (2009), 859-888.

\bibitem{EWG} J. Ellenberg and  J. Wiltshire-Gordon, \textit{Algebraic structures on cohomology of configuration spaces of manifolds with flows}, preprint,  arXiv:1508.02430.


\bibitem{FW} B. Farb and J. Wolfson, \textit{\'Etale homological stability and arithmetic statistics
}, preprint, arXiv:1512.00415.

\bibitem{Gad} N. Gadish, \textit{Categories of FI type: a unified approach to generalizing representation stability and character polynomials}, J. Algebra 480 (2017), 450-486. arXiv:1608.02664.

\bibitem{Gad2} N. Gadish, \textit{Representation stability for families of linear subspace arrangements
},  Adv. Math. 322 (2017), 341–377, arXiv:1603.08547.

\bibitem{GL5} W.L. Gan and L. Li, \textit{Noetherian property of infinite EI categories}, New York J. Math. 21 (2015), 369-382, arXiv:1407.8235.

\bibitem{GL1} W.L. Gan and L. Li, \textit{Koszulity of directed categories in representation stability theory}, J. Algebra 501 (2018), 88-129. arXiv:1411.5308.

\bibitem{GL4} W.L. Gan and L. Li, \textit{On central stability},  Bull. Lond. Math. Soc. 49 (2017), no. 3, 449–462, arXiv:1504.07675.

\bibitem{GL2} W.L. Gan and L. Li, \textit{An inductive machinery for representations of categories with shift functors}, preprint, arXiv:1610.09081.

\bibitem{GL3} W.L. Gan and L. Li, \textit{Bounds on homological invariants of VI-modules}, preprint, arXiv:1710.10223.

\bibitem{GW1} W.L. Gan and J. Watterlond, \textit{A representation stability theorem for VI-modules
},  Algebr. Represent. Theory 21 (2018), no. 1, 47–60, arXiv:1602.00654.

\bibitem{GW2} W.L. Gan and J. Watterlond, \textit{Stable decompositions of certain representations of the finite general linear groups}, to appear in Transformation Groups, arXiv:1605.08434.

\bibitem{Har1} N. Harman, \textit{Stability and periodicity in the modular representation theory of symmetric groups}, preprint, arXiv:1509.06414.

\bibitem{Har2} N. Harman, \textit{Virtual Specht stability for FI-modules in positive characteristic
}, J. Algebra 488 (2017), 29-41, arXiv:1607.04271.

\bibitem{HT} L. Hoa and N. Trun, \textit{On the Castelnuovo-Mumford regularity and the arithmetic degree of monomial ideals}, Math. Z. 229 (1998), 519-537.

\bibitem{Hu}  C. Huneke, \textit{Uniform bounds in noetherian rings}, Invent. Math. 107 (1992), 203-223.

\bibitem{IR} S. Iyengar and T. R\"{o}mer, \textit{Linearity defects of modules over commutative rings}. J. Algebra 322 (2009), 3212-3237.

\bibitem{JR} R. Jim\'enez Rolland, \textit{On the cohomology of pure mapping class groups as FI-modules}, J. Homotopy Relat. Struct. 10 (2015), no. 3, 401–424, arXiv:1207.6828.

\bibitem{JRW1} R. Jim\'enez Rolland and J. Wilson, \textit{Convergence criteria for FI$_W$-algebras and polynomial statistics on maximal tori in type B/C}, Adv. Math. 310 (2017), 121-158, arXiv:1604.06392.

\bibitem{JRW2} R. Jim\'enez Rolland and J. Wilson, \textit{Stability for hyperplane complements of type B/C and statistics on squarefree polynomials over finite fields}, preprint, arXiv:1703.06349.


\bibitem{L1} L. Li, \textit{Homological degrees of representations of categories with shift functors}, Trans. Amer. Math. Soc. 370 (2018), 2563-2587, arXiv:1507.08023.

\bibitem{L2} L. Li, \textit{Upper bounds for homological invariants of $FI_G$-modules}, Arch. Math. (Basel) 107 (2016), 201-211, arXiv:1512.05879.

\bibitem{LR} L. Li and E. Ramos, \textit{Depth and the Local Cohomology of $FI_G$-modules}, to appear in Adv. Math., arXiv:1602.04405.

\bibitem{LY1} L. Li and N. Yu, \textit{Filtrations and homological degrees of FI-modules}, J. Algebra 472 (2017) 369-398, arXiv:1511.02977.

\bibitem{LY2} L. Li and N. Yu, \textit{$\FI^m$-modules over Noetherian rings}, preprint, arXiv:1705.00876.

\bibitem{MW1} J. Miller and J. Wilson, \textit{Higher order representation stability and ordered configuration spaces of manifolds}, preprint, arXiv:1611.01920.

\bibitem{MW2} J. Miller and J. Wilson, \textit{Quantitative representation stability over linear groups
}, preprint,  arXiv:1709.03638.

\bibitem{MPW} J. Miller, P. Patzt, J. Wilson, \textit{Central stability for the homology of congruence subgroups and the second homology of Torelli groups}, preprint, arXiv:1704.04449.

\bibitem{Nag} R. Nagpal, \textit{FI-modules and the cohomology of modular representations of symmetric groups}, preprint, arXiv:1505.04294.

\bibitem{Nag2} R. Nagpal, \textit{VI-modules in non-describing characteristic, Part I}, preprint, arXiv:1709.07951.


\bibitem{Pa1} P. Patzt, \textit{Representation stability for filtrations of Torelli groups}, preprint,  arXiv:1608.06507.

\bibitem{Pa2} P. Patzt, \textit{Central stability homology}, preprint, arXiv:1704.04128.

\bibitem{PW} P. Patzt and X. Wu, \textit{Stability results for Houghton groups}, Algebr. Geom. Topol. 16 (2016), no. 4, 2365–2377, arXiv:1509.07639.


\bibitem{Put} A. Putman, \textit{Stability in the homology of congruence subgroups}, Invent. Math. 202 (2015), no. 3, 987-1027, arXiv:1201.4876.

\bibitem{PS} A. Putman and S. Sam, \textit{Representation stability and finite linear groups},  Duke Math. J. 166 (2017), no. 13, 2521-2598, arXiv:1408.3694.

\bibitem{PSS} A. Putman, S. Sam and A. Snowden, \textit{Stability in the homology of unipotent groups
}, preprint, arXiv:1711.11080.

\bibitem{R1} E. Ramos, \textit{Homological invariants of FI-modules and $FI_G$-modules}, preprint, arXiv:1511.03964.

\bibitem{R2} E. Ramos, \textit{On the degreewise coherence of $FI_G$-modules}, New York J. Math. 23 (2017), 873-895, arXiv:1606.04514.

\bibitem{SS4} S. Sam and A. Snowden, \textit{Stability patterns in representation theory}, Forum Math. Sigma 3 (2015), e11, 108 pp., arXiv:1302.5859.

\bibitem{SS} S. Sam and A. Snowden, \textit{Gr\"{o}bner methods for representations of combinatorial categories}, J. Amer. Math. Soc. 30 (2017), 159-203. arXiv:1409.1670.

\bibitem{SS5} S. Sam and A. Snowden, \textit{GL-equivariant modules over polynomial rings in infinitely many variables}, Trans. Amer. Math. Soc. 368 (2016), 1097-1158, arXiv:1206.2233.

\bibitem{SS1} S. Sam and A. Snowden, \textit{GL-equivariant modules over polynomial rings in infinitely many variables. II}, preprint, arXiv:1703.04516.

\bibitem{SS2} S. Sam and A. Snowden, \textit{Regularity bounds for twisted commutative algebras}, preprint, arXiv:1704.01630.

\bibitem{SS3} S. Sam and A. Snowden, \textit{Representations of categories of G-maps}, to appear in J. Reine Angew. Math., arXiv:1410.6054.

\bibitem{Tr} N. Trung, \textit{The Castelnuovo regularity of the Rees algebra and the associated graded ring}, Trans. Amer. Math. Soc. 350 (1998), 2813-2832.

\bibitem{Webb} P. Webb, \textit{Standard stratifications of EI categories and Alperin's weight conjecture}, J. Algebra 320 (2008), 4073-4091.

\bibitem{Wi1} J. Wilson, \textit{FI$_W$-modules and stability criteria for representations of the classical Weyl groups},  J. Algebra 420 (2014), 269–332,  arXiv:1309.3817.

\bibitem{Wi2} J. Wilson, \textit{FI$_W$-modules and constraints on classical Weyl group characters},  Math. Z. 281 (2015), no. 1-2, 1–42, arXiv:1503.08510.



\end{thebibliography}
\end{document}